\documentclass[reqno,11pt]{article} %leqno is the option to put formula numbers on the left side
\usepackage[margin=1in]{geometry}  % set the margins to 1in on all sides
\usepackage{amsmath, amssymb}
\usepackage{amsthm} %theorem environment option
\usepackage{color}
%
 %changing the interline spacing
 %footnote counter
\newcommand{\Mod}[1]{\ (\textup{mod}\ #1)}
%
%%%%%%%%% Theorem-like environments %%%%%%%%%%%
%
\theoremstyle{plain} %text of this environment is typesetted in italics
\newtheorem{theorem}{\indent\sc Theorem}[section]
\newtheorem{lemma}[theorem]{\indent\sc Lemma}

\newtheorem{proposition}[theorem]{\indent\sc Proposition}

\theoremstyle{definition} %text of this environment is typesetted in roman letters

\newtheorem{remark}[theorem]{\indent\sc Remark}

%
%If a theorem-like environment should not be numbered,
%add * after \newtheorem, and delete the counter option such as [theorem].

%
%%%%% Proof %%%%%

%The following commands are available in the proof environment:
%\begin{proof}
%\end{proof}
%The end of a proof is marked with a square.
%%%%%%%%%%%%%%%%%%%%%%%%%%%%%%%%%%%%%%%%%
\makeatletter
%The following command is available for address and e-mail address:
%\address{<address>}{<E-mail address>}
%%%%%%%% definition of "\address" command %%%%%%%%%%%
\def\address#1#2{\begingroup
\noindent\parbox[t]{7.8cm}{%
\small{\scshape\ignorespaces#1}\par\vskip1ex
\noindent\small{\itshape E-mail address}%
\/: #2\par\vskip4ex}\hfill%
\endgroup}%
%%%%%%%%%%
\makeatother
%%%%%%%%%%%%%%%%%%%%%%%%%%%%%%%%%%%%%%%%%
%
\title{On a problem of Hasse and Ramachandra} %title of the paper
\author{
\textsc{Ja Kyung Koo, Dong Hwa Shin and Dong Sung Yoon$^*$} %names of authors
}
\date{} %leave empty
%
%%%%%%%%%%%%%%%%%%%%%%%

\begin{document}

\allowdisplaybreaks

\maketitle

%%%%%%%%%%%%%%% footnote %%%%%%%%%%%%%%%%
\footnote{ %2010 MSC numbers
2010 \textit{Mathematics Subject Classification}. Primary 11R37; Secondary 11G15, 11G16.}
\footnote{ %key words and phrases
\textit{Key words and phrases}. class field theory, complex multiplication, Weber function.} \footnote{
\thanks{
$^*$Corresponding author.\\
The second author was supported by the National Research Foundation of Korea (NRF)
grant funded by the Korea government (MSIP) (2017R1A2B1006578), and by Hankuk
University of Foreign Studies Research Fund of 2017. 
The third (corresponding) author was
supported by Basic Science Research Program through the National Research Foundation of
Korea(NRF) funded by the Ministry of Education (2017R1D1A1B03030015).
}}
%%%%%%%%%%%%%%%%%%%%%%%%%%%%%%%%%%%%%%%%%

\begin{abstract}
Let $K$ be an imaginary quadratic field, and let $\mathfrak{f}$ be
a nontrivial integral ideal of $K$.
Hasse and Ramachandra asked whether
the ray class field of $K$ modulo $\mathfrak{f}$ can be
generated by a single value of the Weber function.
We completely resolve this question when $\mathfrak{f}=(N)$
for an integer $N>1$.
\end{abstract}

\maketitle

\section {Introduction}

Let $K$ be an imaginary quadratic field with ring of integers $\mathcal{O}_K$, and let
$E$ be an elliptic curve
with complex multiplication by $\mathcal{O}_K$.
When $E$ is given by the affine model
\begin{equation*}
y^2=4x^3-g_2x-g_3\quad\textrm{with}~
g_2=g_2(\mathcal{O}_K)~\textrm{and}~g_3=g_3(\mathcal{O}_K),
\end{equation*}
the \textit{Weber function}
$h:\mathbb{C}/\mathcal{O}_K\rightarrow\mathbb{P}^1(\mathbb{C})$ is defined by
\begin{equation}\label{Weberdef}
h(z)=\left\{\begin{array}{ll}
(g_2^2/\Delta)\wp(z)^2 & \textrm{if}~K=\mathbb{Q}(\sqrt{-1}),\\
(g_3/\Delta)\wp(z)^3 & \textrm{if}~K=\mathbb{Q}(\sqrt{-3}),\\
(g_2g_3/\Delta)\wp(z) & \textrm{otherwise},
\end{array}\right.
\end{equation}
where $\Delta=g_2^3-27g_3^2$ and $\wp(z)=\wp(z;\,\mathcal{O}_K)$.
This map gives rise to an isomorphism
of $E/\mathrm{Aut}(E)$ onto $\mathbb{P}^1(\mathbb{C})$
(\cite[Theorem 7 in Chapter 1]{Lang87}).
\par
Let $\mathfrak{f}$ be a proper nontrivial ideal of $\mathcal{O}_K$.
We denote by $H$ the Hilbert class field of $K$, and by
$K_\mathfrak{f}$ the ray class field of $K$ modulo $\mathfrak{f}$.
As a consequence of the main theorem of the theory of complex multiplication,
Hasse proved in \cite{Hasse} that
\begin{equation}\label{CM}
H=K(j)~\textrm{with}~
j=1728\,\frac{g_2^3}{\Delta}\quad\textrm{and}\quad
K_\mathfrak{f}=H\left(h(z_0)\right)~\textrm{for some}~
z_0\in\mathfrak{f}^{-1}.
\end{equation}
See also \cite[Chapter 10]{Lang87}.
In his letter to Hecke, Hasse further asked
whether $K_\mathfrak{f}$ can be generated by a single value of $h$ without
the $j$-invariant (\cite[p. 91]{F-L-R}),
and Ramachandra also mentioned this problem later in \cite{Ramachandra}.
It was Sugawara who first gave a partial answer to this question
(\cite{Sugawara33} and \cite{Sugawara36}),
however, it still remains an open question.
\par
Now, let $\mathfrak{f}=(N)$ for an integer $N>1$. Note by (\ref{CM}) that 
the question is trivial if the class number of $K$ is one.
Recently, the case $\gcd(N,\,6)=1$ was dealt with in \cite{K-S-Y}
by the same authors of this paper, and the case $N\in\{2,\,3,\,4,\,6\}$
will be appeared in \cite{J-K-S}.
In this paper by modifying the idea of \cite{K-S-Y}, that is, through careful understanding
about the characters on class groups and the  second Kronecker limit formula 
we shall eventually resolve
Hasse-Ramachandra's problem for $\mathfrak{f}=(N)$.

\section {The second Kronecker limit formula}

For $\mathbf{v}=\begin{bmatrix}r_1\\r_2\end{bmatrix}\in(\mathbb{Q}\setminus\mathbb
{Z})^2$, we define the (first) \textit{Fricke function} $f_{\mathbf{v}}(\tau)$ on the upper half-plane 
$\mathbb{H}$ by
\begin{equation}\label{Frickedef}
f_{\mathbf{v}}(\tau)=
\frac{g_2(\tau)g_3(\tau)}{\Delta(\tau)}\,\wp(r_1\tau+r_2),
\end{equation}
where $g_2(\tau)=g_2([\tau,\,1])$,
$g_3(\tau)=g_3([\tau,\,1])$, $\Delta(\tau)=
\Delta([\tau,\,1])$ and $\wp(z)=\wp(z;\,[\tau,\,1])$.
This function depends only on $\pm\mathbf{v}\Mod{\mathbb{Z}^2}$, 
and is holomorphic on $\mathbb{H}$ (\cite[Chapters 3 and 6]{Lang87}).
Furthermore, we define the \textit{Siegel function}
$g_{\mathbf{v}}(\tau)$ on $\mathbb{H}$
by the following infinite product
\begin{equation*}
g_{\mathbf{v}}(\tau)
=-e^{\pi\mathrm{i}r_2(r_1-1)}q^{(1/2)(r_1^2-r_1+1/6)}
(1-q^{r_1}e^{2\pi\mathrm{i}r_2})\prod_{n=1}^\infty (1-q^{n+r_1}e^{2\pi\mathrm{i}r_2})
(1-q^{n-r_1}e^{-2\pi\mathrm{i}r_2}),
\end{equation*}
where $q=e^{2\pi\mathrm{i}\tau}$.
If $N$ is a positive integer so that $N\mathbf{v}\in\mathbb{Z}^2$, then 
$g_{\mathbf{v}}(\tau)^{12N}$
depends only on $\pm\mathbf{v}\Mod{\mathbb{Z}^2}$, 
and has neither zeros nor poles on $\mathbb{H}$ (\cite[$\S$2.1]{K-L}).

\begin{lemma}\label{FrickeSiegel}
Let $\mathbf{u},\,\mathbf{v}\in(\mathbb{Q}\setminus\mathbb{Z})^2$ such that
$\mathbf{u}\not\equiv\pm\mathbf{v}\Mod{\mathbb{Z}^2}$. Then we have the relation
\begin{equation*}
\left(f_\mathbf{u}(\tau)-
f_\mathbf{v}(\tau)\right)^6=\frac{j(\tau)^2(j(\tau)-1728)^3}{2^{30}3^{24}}
\frac{g_{\mathbf{u}+\mathbf{v}}(\tau)^6g_{\mathbf{u}-\mathbf{v}}(\tau)^6}
{g_\mathbf{u}(\tau)^{12}g_\mathbf{v}(\tau)^{12}}.
\end{equation*}
\end{lemma}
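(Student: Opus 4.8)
The plan is to trace the asserted identity back to the classical product formula for a difference of two values of the Weierstrass $\wp$-function and then to translate that formula into Siegel functions by way of Klein forms.

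First I would record the sigma-function factorization. Fix $\tau\in\mathbb{H}$, put $L=[\tau,1]$, and write $\mathbf{u}=\begin{bmatrix}u_1\\u_2\end{bmatrix}$, $\mathbf{v}=\begin{bmatrix}v_1\\v_2\end{bmatrix}$, $z_\mathbf{u}=u_1\tau+u_2$, $z_\mathbf{v}=v_1\tau+v_2$. The classical identity
\begin{equation*}
\wp(z_\mathbf{u};L)-\wp(z_\mathbf{v};L)=-\frac{\sigma(z_\mathbf{u}+z_\mathbf{v};L)\,\sigma(z_\mathbf{u}-z_\mathbf{v};L)}{\sigma(z_\mathbf{u};L)^2\,\sigma(z_\mathbf{v};L)^2}
\end{equation*}
holds, and since $\mathbf{u},\mathbf{v}\in(\mathbb{Q}\setminus\mathbb{Z})^2$ with $\mathbf{u}\not\equiv\pm\mathbf{v}\Mod{\mathbb{Z}^2}$, none of the four arguments lies in $L$; hence both sides are finite and nonzero, and moreover $f_\mathbf{u},f_\mathbf{v},g_\mathbf{u},g_\mathbf{v},g_{\mathbf{u}+\mathbf{v}},g_{\mathbf{u}-\mathbf{v}}$ are all defined on $\mathbb{H}$ with the last four nowhere zero.

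Next I would pass to Klein forms. Writing $\sigma(w;L)=e^{\frac12 w\,\eta_L(w)}\mathfrak{k}(w;L)$, where $\mathfrak{k}$ is the Klein form and $\eta_L\colon\mathbb{C}\to\mathbb{C}$ is the ($\mathbb{R}$-linear) Weierstrass quasi-period map, the four exponential prefactors produced on substitution cancel identically — a one-line computation using only the additivity of $\eta_L$ — so that
\begin{equation*}
\wp(z_\mathbf{u};L)-\wp(z_\mathbf{v};L)=-\frac{\mathfrak{k}_{\mathbf{u}+\mathbf{v}}(\tau)\,\mathfrak{k}_{\mathbf{u}-\mathbf{v}}(\tau)}{\mathfrak{k}_\mathbf{u}(\tau)^2\,\mathfrak{k}_\mathbf{v}(\tau)^2},\qquad\mathfrak{k}_\mathbf{w}(\tau):=\mathfrak{k}(w_1\tau+w_2;[\tau,1]).
\end{equation*}
I would then invoke the relation between Siegel functions and Klein forms from \cite{K-L}, namely $g_\mathbf{w}(\tau)=\eta(\tau)^2\mathfrak{k}_\mathbf{w}(\tau)$, which one verifies by matching the product expansion of $g_\mathbf{w}$ displayed above against those of the square $\eta(\tau)^2$ of the Dedekind eta function and of $\mathfrak{k}_\mathbf{w}(\tau)$. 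Substituting $\mathfrak{k}_\mathbf{w}=g_\mathbf{w}/\eta^2$ and collecting the powers of $\eta$ yields
\begin{equation*}
f_\mathbf{u}(\tau)-f_\mathbf{v}(\tau)=\frac{g_2(\tau)g_3(\tau)}{\Delta(\tau)}\bigl(\wp(z_\mathbf{u};L)-\wp(z_\mathbf{v};L)\bigr)=-\frac{g_2(\tau)g_3(\tau)}{\Delta(\tau)}\,\eta(\tau)^4\,\frac{g_{\mathbf{u}+\mathbf{v}}(\tau)g_{\mathbf{u}-\mathbf{v}}(\tau)}{g_\mathbf{u}(\tau)^2 g_\mathbf{v}(\tau)^2}.
\end{equation*}

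Finally I would raise this to the sixth power and reduce the constants. Using $\eta(\tau)^{24}=\Delta(\tau)$, so that $\eta^{24}$ absorbs one factor $\Delta^{-1}$, one obtains
\begin{equation*}
\bigl(f_\mathbf{u}(\tau)-f_\mathbf{v}(\tau)\bigr)^6=\frac{g_2(\tau)^6 g_3(\tau)^6}{\Delta(\tau)^5}\cdot\frac{g_{\mathbf{u}+\mathbf{v}}(\tau)^6 g_{\mathbf{u}-\mathbf{v}}(\tau)^6}{g_\mathbf{u}(\tau)^{12}g_\mathbf{v}(\tau)^{12}},
\end{equation*}
so it only remains to check $g_2^6 g_3^6/\Delta^5=j^2(j-1728)^3/(2^{30}3^{24})$. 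From $j=1728\,g_2^3/\Delta$ one gets $g_2^3=j\Delta/1728$, and from $\Delta=g_2^3-27g_3^2$ one gets $27g_3^2=g_2^3-\Delta=(j-1728)\Delta/1728$; hence $g_2^6=j^2\Delta^2/1728^2$ and $g_3^6=(j-1728)^3\Delta^3/(27^3\cdot1728^3)$, giving $g_2^6g_3^6/\Delta^5=j^2(j-1728)^3/(27^3\cdot1728^5)$, and $27^3\cdot1728^5=(3^3)^3(2^6 3^3)^5=2^{30}3^{24}$ completes the proof.

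The one genuinely delicate point is the bookkeeping of normalizations in the middle step: one must apply $g_\mathbf{w}=\eta^2\mathfrak{k}_\mathbf{w}$ and $\eta^{24}=\Delta$ in mutually compatible conventions, so that the non-canonical factors — a possible power of $2\pi$ attached to the convention $\Delta=g_2^3-27g_3^2$, and the roots of unity appearing in the $q$-expansions of $\sigma$, $\mathfrak{k}_\mathbf{w}$ and $g_\mathbf{w}$ — enter as twelfth roots of unity or twelfth powers and therefore disappear once the sixth power is taken. Everything else is routine tracking of modular weights and the elementary algebra above.
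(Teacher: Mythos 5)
Your proof is correct and follows essentially the same route as the sources the paper cites for this lemma (the paper offers no argument of its own beyond references to Lang, Ch.~18, and Kubert--Lang): the classical identity $\wp(z_\mathbf{u})-\wp(z_\mathbf{v})=-\sigma(z_\mathbf{u}+z_\mathbf{v})\sigma(z_\mathbf{u}-z_\mathbf{v})/\sigma(z_\mathbf{u})^2\sigma(z_\mathbf{v})^2$, cancellation of the quasi-period exponentials to pass to Klein forms, the Klein--Siegel relation, and the elementary $j$-algebra giving $g_2^6g_3^6/\Delta^5=j^2(j-1728)^3/2^{30}3^{24}$. The one imprecision is the normalization you yourself flag: with the paper's convention $\Delta=g_2^3-27g_3^2=(2\pi)^{12}\eta^{24}$ the exact relation is $g_\mathbf{w}=2\pi\mathrm{i}\,\eta^2\,\mathfrak{k}_\mathbf{w}$, so your first-power display is off by $(2\pi\mathrm{i})^2$, but this is exactly compensated when $(2\pi)^{12}\eta^{24}=\Delta$ is used in place of your $\eta^{24}=\Delta$, and the sixth-power identity with the constant $2^{30}3^{24}$ comes out exactly as stated.
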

\begin{proof}
See \cite[Theorem 2 in Chapter 18]{Lang87} and \cite[p. 29 and p. 51]{K-L}.
\end{proof}

Let $K$ be an imaginary quadratic field, let
$\mathfrak{f}$ be a proper nontrivial ideal of $\mathcal{O}_K$ and let $N$ ($>1$) be the smallest positive integer in $\mathfrak{f}$.
We denote by $\mathrm{Cl}(\mathfrak{f})$ the ray class group of $K$ modulo $\mathfrak{f}$.
Then $\mathrm{Gal}(K_\mathfrak{f}/K)$ is isomorphic to $\mathrm{Cl}(\mathfrak{f})$ via the Artin map $\sigma=\sigma_\mathfrak{f}:\mathrm{Cl}(\mathfrak{f})
\rightarrow\mathrm{Gal}(K_\mathfrak{f}/K)$.
Let $C\in\mathrm{Cl}(\mathfrak{f})$.
Take any integral ideal $\mathfrak{c}$ in the class $C$ and express
\begin{align*}
\mathfrak{f}\mathfrak{c}^{-1}&=[\omega_1,\,\omega_2]\quad\textrm{for some}~ \omega_1,\,\omega_2\in\mathbb{C}~\textrm{such that}~\omega=\frac{\omega_1}{\omega_2}\in\mathbb{H},\\
1&=r_1\omega_1+r_2\omega_2\quad\textrm{for some}~r_1,\,r_2\in(1/N)\mathbb{Z}.
\end{align*}
We define the
\textit{Fricke invariant} $f_\mathfrak{f}(C)$ and the
\textit{Siegel-Ramachandra invariant} $g_\mathfrak{f}(C)$ by
\begin{equation}\label{S-Rinvariant}
f_\mathfrak{f}(C)=f_{\left[\begin{smallmatrix}r_1\\r_2\end{smallmatrix}\right]}(\omega)
\quad\textrm{and}\quad
g_\mathfrak{f}(C)=g_{\left[\begin{smallmatrix}r_1\\r_2\end{smallmatrix}\right]}(\omega)^{12N},
\end{equation}
respectively.
These values depend only on the class $C$, not on the choices of $\mathfrak{c}$, $\omega_1$ and $\omega_2$ (\cite[$\S$6.2 and $\S$6.3]{Lang87} and \cite[$\S$2.1 and 11.1]{K-L}).

\begin{proposition}\label{transformation}
The  invariants $f_\mathfrak{f}(C)$ and $g_\mathfrak{f}(C)$ belong to $K_\mathfrak{f}$.
Furthermore, they satisfy
\begin{equation*}
f_\mathfrak{f}(C)^{\sigma(C')}=f_\mathfrak{f}(CC')
\quad\textrm{and}\quad
g_\mathfrak{f}(C)^{\sigma(C')}=g_\mathfrak{f}(CC')\quad\textrm{for all}~C'\in\mathrm{Cl}(\mathfrak{f}).
\end{equation*}
\end{proposition}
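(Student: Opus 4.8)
The plan is to deduce the proposition from the main theorem of complex multiplication for elliptic curves equipped with a torsion structure --- equivalently, from Shimura's reciprocity law --- following \cite[Chapter 10]{Lang87} and \cite[$\S$11.1]{K-L}; the essential work is to match the idelic statement with the ideal-theoretic recipe (\ref{S-Rinvariant}). First I would rewrite the invariants intrinsically. Choosing $\mathfrak{c}\in C$ integral and prime to $N$, write $\mathfrak{f}\mathfrak{c}^{-1}=[\omega_1,\,\omega_2]=\omega_2[\omega,\,1]$, so that $\mathfrak{a}:=[\omega,\,1]$ is a proper $\mathcal{O}_K$-ideal with $K=\mathbb{Q}(\omega)$; the relation $1=r_1\omega_1+r_2\omega_2$ identifies $\mathbf{v}$ with the torsion point $P:=\omega_2^{-1}+\mathfrak{a}\in(1/N)\mathfrak{a}/\mathfrak{a}$ expressed in the $\mathbb{Z}$-basis $(\omega,\,1)$ of $\mathfrak{a}$, and (\ref{S-Rinvariant}) becomes $f_\mathfrak{f}(C)=f_{\mathbf{v}}(\omega)$ and $g_\mathfrak{f}(C)=g_{\mathbf{v}}(\omega)^{12N}$, where $f_{\mathbf{v}},g_{\mathbf{v}}^{12N}$ lie in the modular function field of level $N$ (the first classically, the second as a modular unit) and depend only on $\pm\mathbf{v}\Mod{\mathbb{Z}^2}$, i.e. only on $\pm P$. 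A convenient point is that, since $\mathfrak{c}$ is prime to $N$, the $\mathfrak{p}$-component of $P$ is nonzero exactly for the primes $\mathfrak{p}\mid\mathfrak{f}$, so $P$ has annihilator precisely $\mathfrak{f}$ in $\mathcal{O}_K$.

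Next I would invoke the main theorem of complex multiplication: for a finite idele $s$ of $K$ with $[s,\,K]=\sigma\in\mathrm{Gal}(K^{\mathrm{ab}}/K)$, the automorphism $\sigma$ carries the pair $(\mathfrak{a},\,P)$ to $(s^{-1}\mathfrak{a},\,s^{-1}P)$ --- here $s^{-1}\mathfrak{a}$ is the lattice with $\mathfrak{p}$-components $s_\mathfrak{p}^{-1}\mathfrak{a}_\mathfrak{p}$, and $s^{-1}P$ is obtained from $P$ by applying $s_\mathfrak{p}^{-1}$ at each $\mathfrak{p}\mid N$ --- so that $f_\mathfrak{f}(C)^\sigma$ and $g_\mathfrak{f}(C)^\sigma$ are the Fricke- and Siegel-Ramachandra-type invariants attached to $(s^{-1}\mathfrak{a},\,s^{-1}P)$. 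For the rationality claim, suppose $[s,\,K]$ is trivial on $K_\mathfrak{f}$; one may then take $s\in K^\times$ (so $[s,\,K]=1$) or $s$ a finite idele that is a unit outside $\mathfrak{f}$ and $\equiv 1\Mod{\mathfrak{f}}$, in which case $s^{-1}\mathfrak{a}=\mathfrak{a}$ and $s^{-1}P=\pm P$: at $\mathfrak{p}\mid\mathfrak{f}$ because $\mathfrak{f}$ annihilates $P$ and $s_\mathfrak{p}\equiv1\Mod{\mathfrak{f}}$, and at $\mathfrak{p}\mid N$ with $\mathfrak{p}\nmid\mathfrak{f}$ because the $\mathfrak{p}$-component of $P$ vanishes. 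Hence $f_\mathfrak{f}(C)$ and $g_\mathfrak{f}(C)$ are fixed by $\mathrm{Gal}(K^{\mathrm{ab}}/K_\mathfrak{f})$, so both lie in $K_\mathfrak{f}$.

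For the transformation formula, given $C'$ I would take $\mathfrak{c}'\in C'$ integral and prime to $N$, and a finite idele $s$ with $(s)=\mathfrak{c}'$ and $s_\mathfrak{p}=1$ for $\mathfrak{p}\mid\mathfrak{f}$, so that (with the standard normalization) $[s,\,K]$ restricts to $\sigma(C')$ on $K_\mathfrak{f}$. Then $s^{-1}\mathfrak{a}=(\mathfrak{c}')^{-1}\mathfrak{a}=\omega_2^{-1}\mathfrak{f}\mathfrak{d}^{-1}$ with $\mathfrak{d}:=\mathfrak{c}\mathfrak{c}'$ integral in the class $CC'$, while $s^{-1}P=\omega_2^{-1}+s^{-1}\mathfrak{a}$ and one checks $\omega_2\cdot(s^{-1}P)=1+\mathfrak{f}\mathfrak{d}^{-1}$, using $s_\mathfrak{p}=1$ at $\mathfrak{p}\mid\mathfrak{f}$ and triviality of $\mathfrak{f}\mathfrak{d}^{-1}$ at the remaining primes above $N$. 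Rescaling the conjugate pair by $\omega_2$ turns it into $(\mathfrak{f}\mathfrak{d}^{-1},\,1+\mathfrak{f}\mathfrak{d}^{-1})$, which is exactly the data that (\ref{S-Rinvariant}) uses --- with the representative $\mathfrak{d}$ of $CC'$ --- to form $f_\mathfrak{f}(CC')$ and $g_\mathfrak{f}(CC')$. Since the invariants depend on the pair (lattice, point) only up to scaling and sign (the well-definedness recorded after (\ref{S-Rinvariant})), we conclude $f_\mathfrak{f}(C)^{\sigma(C')}=f_\mathfrak{f}(CC')$ and $g_\mathfrak{f}(C)^{\sigma(C')}=g_\mathfrak{f}(CC')$.

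The main obstacle, as always with complex multiplication, is the bookkeeping rather than any single estimate: pinning down the normalizations of the Artin map and of the reciprocity law so that an idele with ideal $\mathfrak{c}'\in C'$ yields the lattice $s^{-1}\mathfrak{a}$ in the class $CC'$ and the exponent $\sigma(C')$, not $\sigma(C')^{-1}$; justifying that the Fricke and Siegel-Ramachandra values are genuinely intrinsic functions of (lattice, point) modulo scaling and $\pm1$ --- so that the conjugate pair determines them even though $j$ itself is simultaneously conjugated --- which is where one must appeal carefully to \cite[Chapter 10]{Lang87} and \cite[$\S$11.1]{K-L}; and carrying out the local computations above the primes dividing $N$ that underlie the two displayed congruences.
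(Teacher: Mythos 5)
Your argument is correct and is essentially the same route the paper takes: the paper proves this proposition by citing \cite[Theorem 1.1 in Chapter 11]{K-L}, and your sketch is precisely the standard derivation of that theorem from the main theorem of complex multiplication together with the reciprocity law, including the correct reduction of the recipe (\ref{S-Rinvariant}) to the intrinsic pair $(\mathfrak{f}\mathfrak{c}^{-1},\,1+\mathfrak{f}\mathfrak{c}^{-1})$ and the correct local computations at the primes dividing $N$. The only point worth making explicit is that for $g_\mathfrak{f}(C)$ the bare main theorem for the Weber function does not suffice, since $g_{\mathbf{v}}^{12N}$ is not a coordinate function of the curve; one needs the reciprocity law for the full level-$N$ modular function field (with the cyclotomic action on Fourier coefficients), which is exactly the form of the statement in the sources you invoke.
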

\begin{proof}
See \cite[Theorem 1.1 in Chapter 11]{K-L}.
\end{proof}

Let $\chi$ be a nonprincipal character of $\mathrm{Cl}(\mathfrak{f})$.
We define the \textit{Stickelberger
element} $S(\chi)=S_\mathfrak{f}(\chi)$ by
\begin{equation}\label{Stickelberger}
S(\chi)=
\sum_{C\in\mathrm{Cl}(\mathfrak{f})}
\chi(C)\ln|g_\mathfrak{f}(C)|,
\end{equation}
and  the \textit{$L$-function} $L_\mathfrak{f}(s,\,\chi)$ by
\begin{equation*}
L_\mathfrak{f}(s,\,\chi)=
\sum_{\mathfrak{a}}\frac{\chi([\mathfrak{a}])}{\mathrm{N}_{K/\mathbb{Q}}(\mathfrak{a})^s}\quad(s\in\mathbb{C}),
\end{equation*}
where $\mathfrak{a}$ runs over all nontrivial ideals of $\mathcal{O}_K$ prime to $\mathfrak{f}$
and $[\mathfrak{a}]$ stands for the class in $\mathrm{Cl}(\mathfrak{f})$ containing
the ideal $\mathfrak{a}$.
We shall denote by $\mathfrak{f}_\chi$ the conductor of the character $\chi$.

\begin{proposition}\label{Kronecker}
Let $\chi_0$ be the primitive character of $\chi$ on $\mathrm{Cl}(\mathfrak{f}_\chi)$.
If $\mathfrak{f}_\chi\neq\mathcal{O}_K$, then we obtain the relation
\begin{equation*}
\left(\prod_{\begin{smallmatrix}\mathfrak{p}~:~\textrm{prime ideals of $\mathcal{O}_K$}\\
~~~\textrm{such that}~\mathfrak{p}\,|\,\mathfrak{f},~\mathfrak{p}\,\nmid\,\mathfrak{f}_\chi\end{smallmatrix}}
(1-\overline{\chi_0}([\mathfrak{p}]))\right)
L_{\mathfrak{f}_\chi}(1,\,\chi_0)=-
\frac{\pi\chi_0([\gamma\mathfrak{d}_K\mathfrak{f}_\chi])}{3N(\mathfrak{f}_\chi)\sqrt{|d_K|}\omega(\mathfrak{f}_\chi)
T_\gamma(\overline{\chi_0})}S(\overline{\chi}),
\end{equation*}
where $\mathfrak{d}_K$ is
the different ideal of the extension $K/\mathbb{Q}$, $\gamma$ is an element of $K$ so that
$\gamma\mathfrak{d}_K\mathfrak{f}_\chi$ is a nontrivial ideal of $\mathcal{O}_K$ prime to $\mathfrak{f}_\chi$,
$N(\mathfrak{f}_\chi)$ is the least positive integer in $\mathfrak{f}_\chi$,
$\omega(\mathfrak{f}_\chi)
=|\{\alpha\in\mathcal{O}_K^*~|~\alpha\equiv1\Mod{\mathfrak{f}_\chi}\}|$ and
\begin{eqnarray*}
T_\gamma(\overline{\chi_0})=
\sum_{\alpha+\mathfrak{f}_\chi\in(\mathcal{O}_K/\mathfrak{f}_\chi)^*}
\overline{\chi_0}([\alpha\mathcal{O}_K])e^{2\pi
\mathrm{i}\mathrm{Tr}_{K/\mathbb{Q}}(\alpha\gamma)}.
\end{eqnarray*}
\end{proposition}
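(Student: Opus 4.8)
The plan is to derive this from the classical second Kronecker limit formula expressed in terms of Siegel functions, which gives $L_{\mathfrak f_\chi}(1,\chi_0)$ in terms of a sum of logarithms of $|g_{\mathbf v}(\tau)|$ against the primitive character, and then to reconcile that formula (stated on the conductor level) with the Stickelberger element $S(\overline\chi)$ defined on the full modulus $\mathfrak f$.

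First I would recall the Kronecker limit formula in the shape found in Lang \cite[Chapter 20]{Lang87} or Kubert--Lang \cite[Chapter 11]{K-L}: for a primitive character $\chi_0$ on $\mathrm{Cl}(\mathfrak f_\chi)$ with $\mathfrak f_\chi \neq \mathcal O_K$, one has an identity of the form
\begin{equation*}
L_{\mathfrak f_\chi}(1,\chi_0) = \frac{(\textup{explicit constant})}{T_\gamma(\overline{\chi_0})}\sum_{C\in\mathrm{Cl}(\mathfrak f_\chi)}\overline{\chi_0}(C)\ln|g_{\mathfrak f_\chi}(C)|,
\end{equation*}
where the Gauss-type sum $T_\gamma(\overline{\chi_0})$ and the factor $\chi_0([\gamma\mathfrak d_K\mathfrak f_\chi])$ enter through the functional-equation/Poisson-summation step that converts the theta-like sum over $\mathfrak f_\chi \mathfrak c^{-1}$ into the Fourier-dual lattice; the $\gamma\mathfrak d_K$ appears exactly because the dual lattice of $\mathfrak f_\chi\mathfrak c^{-1}$ is $(\mathfrak d_K\mathfrak f_\chi\mathfrak c^{-1})^{-1}$ up to scaling. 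I would track the constants $\pi$, $3$, $N(\mathfrak f_\chi)$, $\sqrt{|d_K|}$, and $\omega(\mathfrak f_\chi)$ through that computation, noting that the $12N$-th power in the definition \eqref{S-Rinvariant} of $g_\mathfrak f(C)$ together with the $q^{(1/2)(r_1^2-r_1+1/6)}$ leading term of the Siegel function product is what produces the Bernoulli-polynomial $B_2$ term that matches the real part of the Kronecker limit formula, and the factor $1/(12N)$ is absorbed into the stated constant.

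Next I would pass from the modulus $\mathfrak f_\chi$ to the modulus $\mathfrak f$. The Stickelberger element $S(\overline\chi) = \sum_{C\in\mathrm{Cl}(\mathfrak f)}\overline\chi(C)\ln|g_\mathfrak f(C)|$ is defined with $\chi$, not $\chi_0$, and over the larger class group. Using Proposition \ref{transformation} (the Galois-equivariance $g_\mathfrak f(C)^{\sigma(C')} = g_\mathfrak f(CC')$) together with the norm-compatibility of Siegel--Ramachandra invariants under the natural surjection $\mathrm{Cl}(\mathfrak f)\twoheadrightarrow\mathrm{Cl}(\mathfrak f_\chi)$ — explicitly, the relation expressing $\prod g_\mathfrak f(C)$ over a fiber in terms of $g_{\mathfrak f_\chi}$ of the image class, which is the source of the Euler-type factor $\prod_{\mathfrak p\mid\mathfrak f,\,\mathfrak p\nmid\mathfrak f_\chi}(1-\overline{\chi_0}([\mathfrak p]))$ — I would rewrite $S(\overline\chi)$ as that product of local factors times $\sum_{C}\overline{\chi_0}(C)\ln|g_{\mathfrak f_\chi}(C)|$. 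Substituting into the conductor-level formula from the previous step and matching the Euler factor on the $L$-function side (removing the primes dividing $\mathfrak f$ but not $\mathfrak f_\chi$) yields precisely the displayed identity.

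The main obstacle I expect is bookkeeping the constant and the argument of $\chi_0$ correctly, in particular the appearance of $[\gamma\mathfrak d_K\mathfrak f_\chi]$ and the placement of $N(\mathfrak f_\chi)$, $\omega(\mathfrak f_\chi)$ and the sign: these depend on a consistent choice of how the lattice $\mathfrak f_\chi\mathfrak c^{-1} = [\omega_1,\omega_2]$ and the vector $\mathbf v = \begin{bmatrix}r_1\\r_2\end{bmatrix}$ with $1 = r_1\omega_1 + r_2\omega_2$ are normalized, and one must check that the various $\mathfrak c$-dependences cancel (so that summing over $C$ is legitimate) — this is guaranteed abstractly by Proposition \ref{transformation} but needs to be made compatible with the analytic computation. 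The cleanest route is to cite the precise form of the second Kronecker limit formula from \cite[Chapter 11]{K-L} and \cite[Chapter 20]{Lang87} for the primitive character, and then spend the bulk of the argument on the elementary but delicate descent from $\mathfrak f$ to $\mathfrak f_\chi$ via the distribution relations for Siegel functions.
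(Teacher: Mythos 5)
Your outline is essentially a reconstruction of the proof of a result that the paper does not reprove at all: the paper's entire proof of Proposition \ref{Kronecker} is the citation to \cite[Theorem 9 in Chapter II]{Siegel} and \cite[Theorem 2.1 in Chapter 11]{K-L}, and the latter is stated precisely in the imprimitive form appearing here, Euler factors $\prod_{\mathfrak{p}\,|\,\mathfrak{f},\ \mathfrak{p}\,\nmid\,\mathfrak{f}_\chi}\bigl(1-\overline{\chi_0}([\mathfrak{p}])\bigr)$ included, so no descent from $\mathfrak{f}$ to $\mathfrak{f}_\chi$ is needed if one simply quotes it. Your route also differs in structure from the cited proofs: Kubert--Lang (and Siegel) work directly at the level $\mathfrak{f}$, expressing each $\ln|g_{\mathfrak{f}}(C)|$ via the second Kronecker limit formula as an Eisenstein--Kronecker value and untwisting with the Gauss sum of the primitive character $\chi_0$, so the Euler factors arise on the $L$-series side (comparing the sum over ideals prime to $\mathfrak{f}$ with the sum prime to $\mathfrak{f}_\chi$), not from norm relations of the units. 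Your alternative --- quote the conductor-level formula for $\chi_0$ and then relate $S(\overline{\chi})$ to the Stickelberger element at level $\mathfrak{f}_\chi$ through the norm/distribution relations of Siegel--Ramachandra invariants --- is a legitimate derivation and makes the origin of the Euler factor conceptually transparent; the point where it can silently go wrong is the normalization of exponents, since $g_{\mathfrak{f}}(C)$ is the $12N(\mathfrak{f})$-th power of a Siegel value while $g_{\mathfrak{f}_\chi}(C')$ is only the $12N(\mathfrak{f}_\chi)$-th power, and the norm relations carry corresponding exponents, so you must verify that the resulting rational factor is exactly what produces the stated constant with $N(\mathfrak{f}_\chi)$ (and not $N(\mathfrak{f})$) in the denominator; also, for the $L$-series application the relevant part of \cite{Lang87} is the chapter treating the second limit formula with $L$-series rather than the limit formula chapter alone. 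With that bookkeeping carried out your plan does give the proposition; the paper's citation route is simply shorter, which is why it is taken.
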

\begin{proof}
See \cite[Theorem 9 in Chapter II]{Siegel} or \cite[Theorem
2.1 in Chapter 11]{K-L}.
\end{proof}

\begin{remark}\label{Eulerfactor}
Since $\chi_0$ is a nonprincipal character of $\mathrm{Cl}(\mathfrak{f}_\chi)$
by the assumption $\mathfrak{f}_\chi\neq\mathcal{O}_K$, we have $L_{\mathfrak{f}_\chi}(1,\,\chi_0)\neq0$
(\cite[Theorem 10.2 in Chapter V]{Janusz}).
Thus, if every prime ideal factor of $\mathfrak{f}$ divides $\mathfrak{f}_\chi$,
then we derive by Proposition \ref{Kronecker} that $S(\overline{\chi})\neq0$.
\end{remark}

\section {Differences of Weber functions}

For an imaginary quadratic field $K$, 
fix an element $\tau_K$ of $\mathbb{H}$ so that
$\mathcal{O}_K=[\tau_K,\,1]$.
From now on, we assume that $K$ is different from $\mathbb{Q}(\sqrt{-1})$ and $\mathbb{Q}(\sqrt{-3})$, and let $N>1$. We then have $j(\tau_K)\neq0,\,1728$ (\cite[p. 261]{Cox}) and
\begin{equation*}
h(r_1\tau_K+r_2)=f_{\left[\begin{smallmatrix}r_1\\r_2\end{smallmatrix}\right]}(\tau_K)
\quad\textrm{for all}~\begin{bmatrix}r_1\\r_2\end{bmatrix}\in(\mathbb{Q}\setminus\mathbb{Z})^2
\end{equation*}
by the definitions (\ref{Weberdef}) and (\ref{Frickedef}).
\par
Let $H_N$
be the ring class field of the 
order of conductor $N$ in $K$.
Then we have a tower of fields
\begin{equation*}
K\subseteq H\subseteq H_N\subseteq K_{(N)}
\end{equation*}
(\cite[$\S$7]{Cox}).
For an integer $t$ prime to $N$, by $C_t=C_{N,\,t}$ we mean the class in the ray class group $\mathrm{Cl}(N)$ of $K$ modulo $(N)$ containing the ideal $(t)$. Note that $C_1$ is the identity element of $\mathrm{Cl}(N)$. 

\begin{lemma}\label{tinvariant}
If $t$ is an integer prime to $N$, then we get
\begin{equation*}
f_{(N)}(C_t) =f_{\left[\begin{smallmatrix}0\\t/N\end{smallmatrix}\right]}(\tau_K)
\quad\textrm{and}\quad
g_{(N)}(C_t) =g_{\left[\begin{smallmatrix}0\\t/N\end{smallmatrix}\right]}(\tau_K)^{12N}.
\end{equation*}
\end{lemma}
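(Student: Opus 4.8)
The plan is to unwind the definition (\ref{S-Rinvariant}) of the Siegel--Ramachandra and Fricke invariants using a convenient representative ideal for the class $C_t$. Since $\gcd(t,\,N)=1$, the principal ideal $(t)$ is an integral ideal of $\mathcal{O}_K$ prime to $(N)$, hence an admissible choice of $\mathfrak{c}$ in the class $C_t\in\mathrm{Cl}(N)$. Recalling that the values $f_{(N)}(C_t)$ and $g_{(N)}(C_t)$ depend only on the class $C_t$ and not on the choices of $\mathfrak{c}$, $\omega_1$ and $\omega_2$, it therefore suffices to carry out the computation with $\mathfrak{c}=(t)$.

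First I would write, as fractional ideals of $K$,
\begin{equation*}
(N)(t)^{-1}=\frac{N}{t}\,\mathcal{O}_K=\frac{N}{t}\,[\tau_K,\,1]
=\Big[\frac{N}{t}\tau_K,\,\frac{N}{t}\Big],
\end{equation*}
so that one may take $\omega_1=(N/t)\tau_K$ and $\omega_2=N/t$, for which $\omega=\omega_1/\omega_2=\tau_K\in\mathbb{H}$. The defining relation $1=r_1\omega_1+r_2\omega_2$ then becomes $r_1\tau_K+r_2=t/N$ upon dividing by $N/t$; since $1$ and $\tau_K$ are linearly independent over $\mathbb{Q}$ (as $\tau_K\notin\mathbb{R}$), comparing coefficients forces $r_1=0$ and $r_2=t/N$, both of which lie in $(1/N)\mathbb{Z}$ as required. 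Moreover $t/N\notin\mathbb{Z}$ because $N>1$ and $\gcd(t,\,N)=1$, so $\left[\begin{smallmatrix}0\\t/N\end{smallmatrix}\right]\notin\mathbb{Z}^2$ and the Fricke and Siegel functions are genuinely defined at this vector.

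Substituting these data into (\ref{S-Rinvariant}) then gives $f_{(N)}(C_t)=f_{\left[\begin{smallmatrix}0\\t/N\end{smallmatrix}\right]}(\tau_K)$ and $g_{(N)}(C_t)=g_{\left[\begin{smallmatrix}0\\t/N\end{smallmatrix}\right]}(\tau_K)^{12N}$, which is precisely the assertion. I do not anticipate any real obstacle: the argument amounts to a direct substitution into the definition, and the only points that deserve a moment's care are verifying that $\mathfrak{c}=(t)$ is integral and prime to $(N)$, and that the second coordinate $t/N$ of the resulting vector fails to be an integer so that the invariants make sense.
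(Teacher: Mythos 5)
Your proposal is correct and follows exactly the paper's argument: take $\mathfrak{c}=(t)$, write $(N)(t)^{-1}=(N/t)\mathcal{O}_K=[N\tau_K/t,\,N/t]$ so that $\omega=\tau_K$ and $1=0\cdot(N\tau_K/t)+(t/N)(N/t)$, and then read off the invariants from the definition (\ref{S-Rinvariant}). The extra checks you mention (that $(t)$ is prime to $(N)$ and that $t/N\notin\mathbb{Z}$) are harmless elaborations of the same one-line computation.
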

\begin{proof}
Since
\begin{equation*}
(N\mathcal{O}_K)(t\mathcal{O}_K)^{-1}=(N/t)\mathcal{O}_K=
[N\tau_K/t,\,N/t]
\quad\textrm{and}\quad
1=0(N\tau_K/t)+(t/N)(N/t),
\end{equation*}
we deduce the lemma by the definition (\ref{S-Rinvariant}).
\end{proof}

For an intermediate field $F$ of the extension $K_{(N)}/K$, we shall denote by $\mathrm{Cl}(K_{(N)}/F)$ the subgroup of $\mathrm{Cl}(N)$
corresponding to $\mathrm{Gal}(K_{(N)}/F)$.

\begin{lemma}\label{ringGalois}
We have
\begin{equation*}
\mathrm{Cl}(K_{(N)}/H_N)=
\{C_t~|~t\in(\mathbb{Z}/N\mathbb{Z})^*/\{\pm 1\}\}
\simeq(\mathbb{Z}/N\mathbb{Z})^*/\{\pm 1\}.
\end{equation*}
\end{lemma}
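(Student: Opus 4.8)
The plan is to identify $\mathrm{Cl}(K_{(N)}/H_N)$ with the kernel of the natural surjection $\mathrm{Cl}(N)\twoheadrightarrow\mathrm{Gal}(H_N/K)$, and to recognize this kernel explicitly as the image of the principal ideals generated by rational integers prime to $N$. Recall from \cite[\S 7]{Cox} that the ring class field $H_N$ of the order $\mathcal{O}=\mathbb{Z}+N\mathcal{O}_K$ corresponds, under class field theory, to the subgroup of $\mathrm{Cl}(N)$ generated by the classes $C_t$ with $t$ an integer prime to $N$; equivalently, $\mathrm{Gal}(K_{(N)}/H_N)$ is generated by the Artin symbols $\sigma(C_t)$. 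Thus the first step is simply to quote this description of the ring class field to obtain $\mathrm{Cl}(K_{(N)}/H_N)=\{C_t \mid t\in(\mathbb{Z}/N\mathbb{Z})^*\}$ as a subgroup of $\mathrm{Cl}(N)$.

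Next I would analyze the map $(\mathbb{Z}/N\mathbb{Z})^*\to\mathrm{Cl}(N)$, $t\mapsto C_t$. It is a group homomorphism since $C_sC_t=C_{st}$, and it is well defined on residues mod $N$ because if $t\equiv t'\Mod N$ then $(t)$ and $(t')$ lie in the same ray class modulo $(N)$ (their quotient is a unit congruent to $1$ modulo $(N)$ — here one uses that $K\neq\mathbb{Q}(\sqrt{-1}),\mathbb{Q}(\sqrt{-3})$, so $\mathcal{O}_K^*=\{\pm1\}$). The key point is to compute the kernel: $C_t=C_1$ in $\mathrm{Cl}(N)$ means $(t)=(\alpha)$ for some $\alpha\in K^*$ with $\alpha\equiv1\Mod{(N)}$, which forces $\alpha=\pm t$, and then $\pm t\equiv1\Mod{N}$ in $\mathbb{Z}$ gives $t\equiv\pm1\Mod N$. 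Hence the kernel is exactly $\{\pm1\}\subseteq(\mathbb{Z}/N\mathbb{Z})^*$, and the homomorphism induces an injection $(\mathbb{Z}/N\mathbb{Z})^*/\{\pm1\}\hookrightarrow\mathrm{Cl}(N)$ with image $\{C_t \mid t\in(\mathbb{Z}/N\mathbb{Z})^*/\{\pm1\}\}$. Combining with the first step yields the displayed identification.

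The step I expect to require the most care is the containment $\mathrm{Cl}(K_{(N)}/H_N)\subseteq\{C_t\}$ — i.e. that no class outside the $C_t$'s fixes $H_N$ — since the reverse containment $\{C_t\}\subseteq\mathrm{Cl}(K_{(N)}/H_N)$ is the easy direction (the $C_t$ restrict trivially to $H_N$ because $H_N/K$ is generated by ideals coprime to $N$ in a way that the rational-integer ideals act trivially on). Both containments, however, are subsumed in Cox's precise correspondence between $\mathcal{O}$-ideals and ray ideals modulo $(N)$; the cleanest route is to cite the isomorphism $\mathrm{Gal}(H_N/K)\cong\mathrm{Cl}(\mathcal{O})$ together with the exact sequence relating $\mathrm{Cl}(\mathcal{O})$ and $\mathrm{Cl}(N)$, whose kernel is generated by the $C_t$. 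A secondary subtlety is keeping track of the $\{\pm1\}$ ambiguity consistently between the ``ideal'' picture (where $(t)=(-t)$) and the residue picture; this is exactly where the hypothesis on $K$ enters and must not be dropped.
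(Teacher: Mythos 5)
Your argument is correct. Note, however, that the paper itself gives no proof of this lemma: it simply cites \cite[Proposition 3.8]{E-K-S}, so your proposal is a self-contained substitute rather than a reproduction of the paper's reasoning. Your route is the standard one and works: by \cite[\S 7 and \S 9]{Cox}, $H_N$ corresponds under the Artin map to the subgroup $P_{K,\mathbb{Z}}(N)/P_{K,1}(N)$ of $\mathrm{Cl}(N)$, i.e.\ to the classes of principal ideals $(\alpha)$ with $\alpha\equiv a\Mod{N\mathcal{O}_K}$ for some integer $a$ prime to $N$; each such class equals $C_a$ because $\alpha/a\equiv1\Mod{(N)}$ multiplicatively, which gives $\mathrm{Cl}(K_{(N)}/H_N)=\{C_t\}$, and your kernel computation for $t\mapsto C_t$ then yields the isomorphism with $(\mathbb{Z}/N\mathbb{Z})^*/\{\pm1\}$. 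One small correction: the remark that well-definedness of $t\mapsto C_t$ uses $\mathcal{O}_K^*=\{\pm1\}$ is misplaced. If $t\equiv t'\Mod{N}$ then $(t)(t')^{-1}=(t/t')$ with $t/t'\equiv1\Mod{(N)}$ multiplicatively, and no information about the unit group is needed; the hypothesis $K\neq\mathbb{Q}(\sqrt{-1}),\mathbb{Q}(\sqrt{-3})$ (in force throughout Section 3 of the paper) enters only in the kernel computation, where $(t)=(\alpha)$ forces $\alpha=\pm t$. With that adjustment the proof is complete and is presumably close in spirit to the cited result.
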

\begin{proof}
See \cite[Proposition 3.8]{E-K-S}.
\end{proof}

Let $t$ be an integer such that
\begin{equation*}
\gcd(N,\,t)=1\quad\textrm{and}\quad t\not\equiv\pm1\Mod{N}.
\end{equation*}
Note that such an integer $t$ always exists except for the four cases $N=2,\,3,\,4,\,6$. Express $(t+1)/N$ and $(t-1)/N$ as
\begin{equation*}
\frac{t+1}{N}=\frac{n_+}{N_+}\quad\textrm{and}
\quad\frac{t-1}{N}=\frac{n_-}{N_-}\,,
\end{equation*}
where $n_+,\,N_+,\,n_-,\,N_-$ are integers such that
$N_+,\,N_->0$ and $\gcd(n_+,N_+)=\gcd(n_-,N_-)=1$.
Observe that the condition $t\not\equiv\pm1\Mod{N}$ is equivalent to
saying that neither $N_+$ nor $N_-$ is equal to $1$.
\par

Now, we define
\begin{equation}\label{xit}
\xi_t=\left(h(t/N)-h(1/N)\right)^{12N}=\left(f_{\left[\begin{smallmatrix}0\\t/N\end{smallmatrix}\right]}(\tau_K)
-f_{\left[\begin{smallmatrix}0\\1/N\end{smallmatrix}\right]}(\tau_K)\right)^{12N}.
\end{equation}
Furthermore, for a character $\chi$ of $\mathrm{Cl}(N)$ we denote by
\begin{equation*}
S(\chi,\,\xi_t)=
\sum_{C\in\mathrm{Cl}(N)}\chi(C)\ln\left|\xi_t^{\sigma(C)}\right|.
\end{equation*}

\begin{lemma}\label{summation}
If $\chi$ is nontrivial on $\mathrm{Cl}(K_{(N)}/H)$, then we obtain
\begin{eqnarray*}
S(\overline{\chi},\,\xi_t)&=&
(N/N_+)\sum_{\substack{B_+\in\mathrm{Cl}(N)\\\Mod{\mathrm{Cl}(K_{(N)}/K_{(N_+)})}}}
\overline{\chi}(B_+)\ln\left|g_{(N_+)}(C_{N_+,\,n_+})^{\sigma(B_+)}\right|
\sum_{A_+\in\mathrm{Cl}(K_{(N)}/K_{(N_+)})}\overline{\chi}(A_+)\\
&&+(N/N_-)\sum_{\substack{B_-\in\mathrm{Cl}(N)\\\Mod{\mathrm{Cl}(K_{(N)}/K_{(N_-)})}}}
\overline{\chi}(B_-)\ln\left|g_{(N_-)}(C_{N_-,\,n_-})^{\sigma(B_-)}\right|
\sum_{A_-\in\mathrm{Cl}(K_{(N)}/K_{(N_-)})}\overline{\chi}(A_-)\\
&&-2(\chi(C_t)+1)S(\overline{\chi}).
\end{eqnarray*}
\end{lemma}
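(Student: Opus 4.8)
The plan is to rewrite the defining difference of Fricke values as a quotient of Siegel functions via Lemma~\ref{FrickeSiegel}, re-express these as Siegel--Ramachandra invariants of suitable moduli, and then compute the character sum $S(\overline{\chi},\xi_t)$ by observing in which subfield of $K_{(N)}$ each resulting factor lies. First I would apply Lemma~\ref{FrickeSiegel} with $\mathbf{u}=\left[\begin{smallmatrix}0\\t/N\end{smallmatrix}\right]$, $\mathbf{v}=\left[\begin{smallmatrix}0\\1/N\end{smallmatrix}\right]$ (so that $\mathbf{u}\pm\mathbf{v}=\left[\begin{smallmatrix}0\\(t\pm1)/N\end{smallmatrix}\right]$); the hypothesis $t\not\equiv\pm1\Mod N$ is exactly the requirement $\mathbf{u}\not\equiv\pm\mathbf{v}\Mod{\mathbb{Z}^2}$. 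Raising the identity of Lemma~\ref{FrickeSiegel} to the power $2N$, evaluating at $\tau_K$, and using~\eqref{xit} gives
\[
\xi_t=\left(\frac{j(\tau_K)^2(j(\tau_K)-1728)^3}{2^{30}3^{24}}\right)^{2N}
\frac{g_{\left[\begin{smallmatrix}0\\(t+1)/N\end{smallmatrix}\right]}(\tau_K)^{12N}\,g_{\left[\begin{smallmatrix}0\\(t-1)/N\end{smallmatrix}\right]}(\tau_K)^{12N}}
{g_{\left[\begin{smallmatrix}0\\t/N\end{smallmatrix}\right]}(\tau_K)^{24N}\,g_{\left[\begin{smallmatrix}0\\1/N\end{smallmatrix}\right]}(\tau_K)^{24N}},
\]
all factors being nonzero since $j(\tau_K)\neq0,1728$ and Siegel functions have no zeros on $\mathbb{H}$; note also that each factor lies in $K_{(N)}$ (the $j$-factor in $H$, the Siegel factors in the appropriate ray class fields) so that applying $\sigma(C)$ is legitimate.

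Next I would pass to Siegel--Ramachandra invariants. Since $(t\pm1)/N=n_\pm/N_\pm$ is in lowest terms we have $N_\pm\mid N$, and since $g_\mathbf{w}(\tau)^{12M}$ depends only on $\pm\mathbf{w}\Mod{\mathbb{Z}^2}$ whenever $M\mathbf{w}\in\mathbb{Z}^2$, Lemma~\ref{tinvariant}, applied with modulus $N_\pm$ (legitimate because $N_\pm>1$ is precisely the hypothesis $t\not\equiv\pm1\Mod N$) and with modulus $N$, yields
\begin{gather*}
g_{\left[\begin{smallmatrix}0\\(t\pm1)/N\end{smallmatrix}\right]}(\tau_K)^{12N}=g_{(N_\pm)}(C_{N_\pm,\,n_\pm})^{N/N_\pm},\\
g_{\left[\begin{smallmatrix}0\\t/N\end{smallmatrix}\right]}(\tau_K)^{12N}=g_{(N)}(C_t),\qquad
g_{\left[\begin{smallmatrix}0\\1/N\end{smallmatrix}\right]}(\tau_K)^{12N}=g_{(N)}(C_1).
\end{gather*}
Substituting these, taking $\ln|\cdot|$, applying $\sigma(C)$ for each $C\in\mathrm{Cl}(N)$, multiplying by $\overline{\chi}(C)$, and summing, I obtain $S(\overline{\chi},\xi_t)$ as a sum of five character sums: one from the factor $j(\tau_K)^2(j(\tau_K)-1728)^3/(2^{30}3^{24})\in H$, two from $g_{(N_\pm)}(C_{N_\pm,n_\pm})\in K_{(N_\pm)}$ with coefficients $N/N_\pm$, one from $g_{(N)}(C_t)$ with coefficient $-2$, and one from $g_{(N)}(C_1)$ with coefficient $-2$.

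Then I would evaluate each of the five sums. The first vanishes: its summand depends only on $C\Mod{\mathrm{Cl}(K_{(N)}/H)}$, so grouping $C$ by cosets of $\mathrm{Cl}(K_{(N)}/H)$ and using that $\overline{\chi}$ restricts to a nontrivial character there (the hypothesis on $\chi$) makes the inner sum zero. For the sums attached to $g_{(N_\pm)}(C_{N_\pm,n_\pm})\in K_{(N_\pm)}$, the summand depends only on $C\Mod{\mathrm{Cl}(K_{(N)}/K_{(N_\pm)})}$; writing $C=B_\pm A_\pm$ with $A_\pm\in\mathrm{Cl}(K_{(N)}/K_{(N_\pm)})$ and $B_\pm$ running over coset representatives, the sum factors as $\bigl(\sum_{B_\pm}\overline{\chi}(B_\pm)\ln|g_{(N_\pm)}(C_{N_\pm,n_\pm})^{\sigma(B_\pm)}|\bigr)\bigl(\sum_{A_\pm}\overline{\chi}(A_\pm)\bigr)$, which with the coefficient $N/N_\pm$ gives the first two terms of the asserted formula. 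For the sum attached to $g_{(N)}(C_t)$, Proposition~\ref{transformation} gives $g_{(N)}(C_t)^{\sigma(C)}=g_{(N)}(C_tC)$; reindexing by $D=C_tC$ and using $\overline{\chi}(C_t^{-1})=\chi(C_t)$ turns this sum into $-2\chi(C_t)S(\overline{\chi})$. For the sum attached to $g_{(N)}(C_1)$, since $C_1$ is the identity we have $g_{(N)}(C_1)^{\sigma(C)}=g_{(N)}(C)$, so that sum is $-2S(\overline{\chi})$. Adding the last two gives $-2(\chi(C_t)+1)S(\overline{\chi})$, and collecting all five contributions yields the claimed identity.

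The computation is essentially organized cancellation once the factorization of $\xi_t$ is available; the step that needs genuine care is the passage in the second paragraph from the Siegel functions $g_{\left[\begin{smallmatrix}0\\(t\pm1)/N\end{smallmatrix}\right]}(\tau_K)^{12N}$ --- whose intrinsic level is the reduced denominator $N_\pm$ rather than $N$ --- to the Siegel--Ramachandra invariants $g_{(N_\pm)}(C_{N_\pm,n_\pm})$ of that smaller modulus, and the correct identification of the field ($H$, $K_{(N_\pm)}$, or $K_{(N)}$) containing each of the five factors, since it is precisely this that dictates how the corresponding character sum collapses.
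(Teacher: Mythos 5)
Your proposal is correct and follows essentially the same route as the paper: factor $\xi_t$ via Lemma \ref{FrickeSiegel}, convert the Siegel factors to Siegel--Ramachandra invariants of moduli $N_\pm$ and $N$ via Lemma \ref{tinvariant} (giving the $N/N_\pm$ exponents), then split each character sum over cosets, kill the $j$-factor term by nontriviality of $\chi$ on $\mathrm{Cl}(K_{(N)}/H)$, and reindex the $g_{(N)}(C_t)$ and $g_{(N)}(C_1)$ sums using Proposition \ref{transformation} to get $-2(\chi(C_t)+1)S(\overline{\chi})$.
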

\begin{proof}
We derive that
\begin{eqnarray*}
S(\overline{\chi},\,\xi_t)
&=&
\sum_{C\in\mathrm{Cl}(N)}
\overline{\chi}(C)
\ln\left|\left(
\frac{j(\tau_K)^{4N}(j(\tau_K)-1728)^{6N}}{2^{60N}3^{48N}}
\right)^{\sigma(C)}
\right|\\
&&+\sum_{C\in\mathrm{Cl}(N)}
\overline{\chi}(C)\ln\left|\left(g_{\left[\begin{smallmatrix}0\\n_+/N_+\end{smallmatrix}\right]}(\tau_K)^{12N}
\right)^{\sigma(C)}\right|
+\sum_{C\in\mathrm{Cl}(N)}
\overline{\chi}(C)\ln\left|\left(g_{\left[\begin{smallmatrix}0\\n_-/N_-\end{smallmatrix}\right]}(\tau_K)^{12N}
\right)^{\sigma(C)}\right|
\\
&&
-\sum_{C\in\mathrm{Cl}(N)}
\overline{\chi}(C)\ln\left|\left(g_{\left[\begin{smallmatrix}0\\t/N\end{smallmatrix}\right]}(\tau_K)^{24N}
\right)^{\sigma(C)}\right|
-\sum_{C\in\mathrm{Cl}(N)}
\overline{\chi}(C)\ln\left|\left(g_{\left[\begin{smallmatrix}0\\1/N\end{smallmatrix}\right]}(\tau_K)^{24N}
\right)^{\sigma(C)}\right|
\\
&&\textrm{by the definition (\ref{xit}) and Lemma \ref{FrickeSiegel}}\\
&=&
\sum_{\substack{B\in\mathrm{Cl}(N)\\\Mod{\mathrm{Cl}(K_{(N)}/H)}}}
\sum_{A\in\mathrm{Cl}(K_{(N)}/H)}\overline{\chi}(AB)
\ln\left|\left(
\frac{j(\tau_K)^{4N}(j(\tau_K)-1728)^{6N}}{2^{60N}3^{48N}}
\right)^{\sigma(AB)}
\right|\\
&&+
(N/N_+)\sum_{\substack{B_+\in\mathrm{Cl}(N)\\\Mod{\mathrm{Cl}(K_{(N)}/K_{(N_+)})}}}
\sum_{A_+\in\mathrm{Cl}(K_{(N)}/K_{(N_+)})}\overline{\chi}(A_+B_+)
\ln\left|g_{(N_+)}(C_{N_+,\,n_+})^{\sigma(A_+B_+)}
\right|\\
&&+(N/N_-)
\sum_{\substack{B_-\in\mathrm{Cl}(N)\\\Mod{\mathrm{Cl}(K_{(N)}/K_{(N_-)})}}}
\sum_{A_-\in\mathrm{Cl}(K_{(N)}/K_{(N_-)})}\overline{\chi}(A_-B_-)
\ln\left|g_{(N_-)}(C_{N_-,\,n_-})^{\sigma(A_-B_-)}
\right|\\
&&
-2\sum_{C\in\mathrm{Cl}(N)}
\overline{\chi}(C)\ln\left|g_{(N)}(C_t)
^{\sigma(C)}\right|
-2\sum_{C\in\mathrm{Cl}(N)}
\overline{\chi}(C)\ln\left|g_{(N)}(C_1)^{\sigma(C)}\right|
\quad\textrm{by Lemma \ref{tinvariant}}\\
&=&
\sum_{B}
\overline{\chi}(B)
\ln\left|\left(
\frac{j(\tau_K)^{4N}(j(\tau_K)-1728)^{6N}}{2^{60N}3^{48N}}
\right)^{\sigma(B)}
\right|
\sum_{A}\overline{\chi}(A)\\
&&+(N/N_+)
\sum_{B_+}
\overline{\chi}(B_+)\ln\left|g_{(N_+)}(C_{N_+,\,n_+})^{\sigma(B_+)}\right|
\sum_{A_+}\overline{\chi}(A_+)\\
&&+(N/N_-)\sum_{B_-}
\overline{\chi}(B_-)\ln\left|g_{(N_-)}(C_{N_-,\,n_-})^{\sigma(B_-)}\right|
\sum_{A_-}\overline{\chi}(A_-)\\
&&
-2\chi(C_t)\sum_{C}
\overline{\chi}(C_tC)\ln\left|g_{(N)}(C_tC)\right|
-2\sum_{C}
\overline{\chi}(C)\ln\left|g_{(N)}(C)\right|\quad\textrm{by (\ref{CM}) and Proposition \ref{transformation}}\\
&=&
(N/N_+)\sum_{B_+}
\overline{\chi}(B_+)\ln\left|g_{(N_+)}(C_{N_+,\,n_+})^{\sigma(B_+)}\right|
\sum_{A_+}\overline{\chi}(A_+)\\
&&+(N/N_-)\sum_{B_-}
\overline{\chi}(B_-)\ln\left|g_{(N_-)}(C_{N_-,\,n_-})^{\sigma(B_-)}\right|
\sum_{A_-}\overline{\chi}(A_-)\\
&&-2(\chi(C_t)+1)S(\overline{\chi})\\
&&\textrm{by the assumption that $\chi$ is nontrivial on $\mathrm{Cl}(K_{(N)}/H)$
and the definition (\ref{Stickelberger})}.
\end{eqnarray*}
\end{proof}

\section {Lemmas on characters of class groups}

If we set
\begin{equation*}
F=K\left(h(1/N)\right)=K\left(
f_{\left[\begin{smallmatrix}0\\1/N\end{smallmatrix}\right]}(\tau_K)\right),
\end{equation*}
then we obtain by (\ref{CM}) that
\begin{equation}\label{intersection}
\mathrm{Cl}(K_{(N)}/H)\cap
\mathrm{Cl}(K_{(N)}/F)
=\mathrm{Cl}(K_{(N)}/HF)
=\mathrm{Cl}(K_{(N)}/K_{(N)})=\{C_1\}.
\end{equation}
In this section, we shall prove the existence of certain characters of class groups
under the assumption that $F$ is properly contained in $K_{(N)}$.

\begin{lemma}\label{character}
Assume that
\begin{equation*}
\gcd(72,\,N)\in\{1,\,8,\,9,\,72\}.
\end{equation*}
Then, there is a character $\chi$ of $\mathrm{Cl}(N)$ satisfying
the following properties\textup{:}
\begin{itemize}
\item[\textup{(A1)}] It is trivial on $\mathrm{Cl}(K_{(N)}/H_{N})$.
\item[\textup{(A2)}] $\chi(C)\neq1$ for any chosen
$C\in\mathrm{Cl}(K_{(N)}/H)\setminus\mathrm{Cl}(K_{(N)}/H_{N})$.
\item[\textup{(A3)}] Every prime ideal factor of $(N)$
divides the conductor $(N)_\chi$.
\end{itemize}
\end{lemma}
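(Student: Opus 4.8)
\emph{Proof proposal.}

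The plan is to build $\chi$ by prescribing its local behaviour at the primes of $K$ dividing $N$. By class field theory there is an exact sequence
\[
1\longrightarrow (\mathcal{O}_K/N\mathcal{O}_K)^*/\{\pm 1\}\longrightarrow \mathrm{Cl}(N)\longrightarrow\mathrm{Cl}(K)\longrightarrow 1,
\]
in which $(\mathcal{O}_K/N\mathcal{O}_K)^*/\{\pm 1\}$ is identified with $\mathrm{Cl}(K_{(N)}/H)$; here we use $\mathcal{O}_K^*=\{\pm 1\}$ and $N\ge 5$, the latter being forced by $\gcd(72,N)\in\{1,8,9,72\}$. Since $\mathbb{C}^*$ is injective as an abelian group, every character $\psi$ of $G:=(\mathcal{O}_K/N\mathcal{O}_K)^*/\{\pm 1\}$ extends to a character $\chi$ of $\mathrm{Cl}(N)$, and $(N)_\chi$ depends only on $\psi$ because the $\mathrm{Cl}(K)$-part of $\mathrm{Cl}(N)$ (i.e. $\mathrm{Gal}(H/K)$) is unramified. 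Viewing $\psi$ as a character of $(\mathcal{O}_K/N\mathcal{O}_K)^*$ trivial on $-1$ and decomposing $(\mathcal{O}_K/N\mathcal{O}_K)^*\simeq\prod_{\mathfrak{p}\,|\,N}(\mathcal{O}_K/\mathfrak{p}^{e_{\mathfrak{p}}})^*$ by the Chinese remainder theorem, with $e_{\mathfrak{p}}=v_{\mathfrak{p}}(N\mathcal{O}_K)$, and writing $\psi=\prod_{\mathfrak{p}}\psi_{\mathfrak{p}}$ accordingly, one has $\mathfrak{p}\mid (N)_\chi$ if and only if $\psi_{\mathfrak{p}}\neq 1$. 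By Lemma \ref{ringGalois}, property (A1) is equivalent to $\psi$ being trivial on the image $G_1$ of $(\mathbb{Z}/N\mathbb{Z})^*$ in $G$, and property (A2) to $\psi(\overline{C})\neq 1$, where $\overline{C}\in G$ is the element corresponding to the chosen $C$; note $\overline{C}\neq 1$ and $\overline{C}\notin G_1$, since $C\notin\mathrm{Cl}(K_{(N)}/H_N)$. Thus the lemma reduces to the group-theoretic statement: \emph{there is a character $\psi$ of $G$ that is trivial on $G_1$, satisfies $\psi_{\mathfrak{p}}\neq 1$ for every $\mathfrak{p}\mid N$, and has $\psi(\overline{C})\neq 1$.}

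I would first secure (A1) and (A3) by a construction one rational prime at a time. For $p\mid N$ let $\iota_p\colon(\mathbb{Z}/p^{v_p(N)}\mathbb{Z})^*\to\prod_{\mathfrak{p}\mid p}(\mathcal{O}_K/\mathfrak{p}^{e_{\mathfrak{p}}})^*$ be the natural map; the goal is a character of $\prod_{\mathfrak{p}\mid p}(\mathcal{O}_K/\mathfrak{p}^{e_{\mathfrak{p}}})^*$ that kills $\mathrm{im}(\iota_p)$ but is nontrivial on each factor. For $p\nmid 6$ this is elementary: when $p$ is inert the cokernel of $(\mathbb{Z}/p^{v_p}\mathbb{Z})^*\hookrightarrow(\mathcal{O}_K/\mathfrak{p}^{e_{\mathfrak{p}}})^*$ has order $(p+1)p^{v_p-1}>1$; when $p$ splits one takes a nontrivial character of $(\mathbb{Z}/p^{v_p}\mathbb{Z})^*\simeq(\mathcal{O}_K/\mathfrak{p}^{v_p})^*$ on one prime above $p$ and its inverse on the other; when $p$ ramifies the cokernel of $(\mathbb{Z}/p^{v_p}\mathbb{Z})^*\hookrightarrow(\mathcal{O}_K/\mathfrak{p}^{2v_p})^*$ has order $p^{v_p}>1$. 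The cases $p=2,3$ are the heart of the matter, and this is where the hypothesis enters: it forces $8\mid N$ when $2\mid N$ and $9\mid N$ when $3\mid N$, which (with the precise requirement depending on the decomposition type of $p$ in $K$) makes the relevant $2$-adic and $3$-adic unit groups large enough for such a local character to exist, and with enough slack for the next step. Multiplying these local characters over all $p\mid N$ gives a character $\psi_0$ of $G$ fulfilling (A1) and (A3).

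It then remains to meet (A2) as well. The characters of $G$ trivial on $G_1$ with all $\psi_{\mathfrak{p}}\neq 1$ are exactly those in $\widehat{G/G_1}$ lying outside the union of the proper subgroups $S_{\mathfrak{p}}:=\{\psi:\psi_{\mathfrak{p}}=1\}$, each of which is proper by the previous step (this is the place where $2\,\|\,N$ would be fatal, since then $(\mathcal{O}_K/\mathfrak{p}^{e_{\mathfrak{p}}})^*$ can be trivial --- e.g. if $2$ splits, so that $e_{\mathfrak{p}}=1$ and $(\mathcal{O}_K/\mathfrak{p})^*=\mathbb{F}_2^*=\{1\}$). Likewise $D:=\{\psi\in\widehat{G/G_1}:\psi(\overline{C})=1\}$ is a proper subgroup because $\overline{C}$ is nontrivial in $G/G_1$. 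A finite abelian group is not a union of two proper subgroups, and more to the point the particular union $\bigl(\bigcup_{\mathfrak{p}\mid N}S_{\mathfrak{p}}\bigr)\cup D$ does not exhaust $\widehat{G/G_1}$ once the local quotients at $p=2,3$ are large enough, which is what the hypothesis buys us; picking $\psi$ outside this union and extending it to $\chi$ produces the desired character.

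The main obstacle is the case analysis at $p=2$ and $p=3$: one has to run through the split, inert and ramified cases for $2$ and for $3$ and verify that $8\mid N$ (respectively $9\mid N$) makes $\prod_{\mathfrak{p}\mid p}(\mathcal{O}_K/\mathfrak{p}^{e_{\mathfrak{p}}})^*$ carry a character that is trivial on $\mathrm{im}(\iota_p)$, nontrivial on each factor, and still flexible enough --- once combined with the contributions of the other primes and of $\mathrm{Cl}(K)$ --- to be chosen nontrivial on the prescribed class $C$. The remaining ingredients (the exact sequence, the reduction to the group-theoretic statement, the primes $p\nmid 6$, and the covering argument) are routine.
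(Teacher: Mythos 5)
Your reduction is sound and, as far as it goes, follows the same route as the argument this lemma actually rests on: the paper itself does not prove the statement but simply cites \cite[Lemma 3.4 and Remark 4.5]{K-S-Y2}, where the same translation into a question about characters of $(\mathcal{O}_K/N\mathcal{O}_K)^*$ trivial on the image of $(\mathbb{Z}/N\mathbb{Z})^*$ is made and the local analysis is carried out prime by prime. The identifications you set up (the exact sequence, the description of $\mathrm{Cl}(K_{(N)}/H_N)$ via Lemma \ref{ringGalois}, and the criterion $\mathfrak{p}\mid(N)_\chi\Leftrightarrow\psi_{\mathfrak{p}}\neq1$) are all correct, as are the three local constructions for $p\nmid 6$.

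There is nonetheless a genuine gap, and it sits exactly where the content of the lemma lies. You defer the entire analysis at $p=2$ and $p=3$ --- the split/inert/ramified case check under $8\mid N$, resp.\ $9\mid N$ --- with the assertion that the hypothesis ``makes the relevant unit groups large enough''; but the hypothesis $\gcd(72,N)\in\{1,8,9,72\}$ exists precisely because of these cases, so a proof that does not verify them has not established the lemma. The same applies to the final covering step: the fact you invoke (a finite abelian group is not a union of two proper subgroups) does not apply, since the union $\bigl(\bigcup_{\mathfrak{p}\mid N}S_{\mathfrak{p}}\bigr)\cup D$ involves one subgroup per prime ideal factor of $N$ plus one more, and a finite abelian group certainly can be a union of three or more proper subgroups (e.g.\ $(\mathbb{Z}/2\mathbb{Z})^2$). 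To repair this one should use that $\widehat{G/G_1}$ splits as a product over the rational primes $p\mid N$, fix admissible local components at all but one prime, and show that at some prime where the image of $\overline{C}$ is nontrivial the admissible local characters take at least two distinct values on it --- and it is exactly this last point that requires the case-by-case inspection at $2$ and $3$ that you have not supplied. As written, the proposal is a correct reduction plus a statement of what remains to be proved, rather than a proof.
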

\begin{proof}
See \cite[Lemma 3.4 and Remark 4.5]{K-S-Y2}.
\end{proof}

\begin{lemma}\label{nontrivial}
Suppose that $F$ is properly contained in $K_{(N)}$.
Then, there is a character $\rho$ of $\mathrm{Cl}(N)$
satisfying the following properties\textup{:}
\begin{itemize}
\item It is trivial on $\mathrm{Cl}(K_{(N)}/H)$, and so
$(N)_\rho=\mathcal{O}_K$.
\item It is nontrivial on $\mathrm{Cl}(K_{(N)}/F)$.
\end{itemize}
Here, $(N)_\rho$ stands for the conductor of the character $\rho$.
\end{lemma}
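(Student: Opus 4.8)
The plan is to obtain $\rho$ purely from Pontryagin duality on the finite abelian group $\mathrm{Cl}(N)$, with the identity (\ref{intersection}) as the only substantive input. Write $A=\mathrm{Cl}(K_{(N)}/H)$ and $B=\mathrm{Cl}(K_{(N)}/F)$ for the two subgroups of $\mathrm{Cl}(N)$ in question. First I would note that $B\neq\{C_1\}$: since $F$ is properly contained in $K_{(N)}$, the group $\mathrm{Gal}(K_{(N)}/F)$ is nontrivial, hence so is the corresponding subgroup $B$. On the other hand, (\ref{intersection}) gives $A\cap B=\mathrm{Cl}(K_{(N)}/HF)=\{C_1\}$. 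Combining the two, $B\not\subseteq A$, because $B\subseteq A$ would force $B=A\cap B=\{C_1\}$.

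Next I would use the fact that a finite abelian group is separated by its characters, applied to $\mathrm{Cl}(N)/A$. The characters of $\mathrm{Cl}(N)$ that are trivial on $A$ are exactly the characters of the quotient $\mathrm{Cl}(N)/A$. If all of them were also trivial on $B$, then the image of $B$ in $\mathrm{Cl}(N)/A$ would be annihilated by every character of $\mathrm{Cl}(N)/A$ and hence be trivial, i.e.\ $B\subseteq A$, contradicting the previous step. Therefore there is a character $\rho$ of $\mathrm{Cl}(N)$ that is trivial on $A=\mathrm{Cl}(K_{(N)}/H)$ and nontrivial on $B=\mathrm{Cl}(K_{(N)}/F)$, which is precisely the asserted nontriviality on $\mathrm{Cl}(K_{(N)}/F)$.

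It remains to check the conductor claim. Since $\rho$ is trivial on $\mathrm{Cl}(K_{(N)}/H)$, it factors through $\mathrm{Cl}(N)/\mathrm{Cl}(K_{(N)}/H)$, which by class field theory is $\mathrm{Gal}(H/K)$, i.e.\ the ideal class group of $K$; in other words $\rho$ is inflated from a character of the ray class group of $K$ modulo $\mathcal{O}_K$. As $H/K$ is unramified at every finite prime, the conductor $(N)_\rho$ divides $\mathcal{O}_K$, so $(N)_\rho=\mathcal{O}_K$. I do not expect any genuine obstacle here: the whole argument rests on (\ref{intersection}) forcing $A\cap B$ to be trivial while $B$ is not, after which duality is automatic; the only point needing a word of justification is that being trivial on $\mathrm{Cl}(K_{(N)}/H)$ already pins down the conductor as $\mathcal{O}_K$, which is immediate from $H/K$ being everywhere unramified.
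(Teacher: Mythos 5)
Your proposal is correct and follows essentially the same route as the paper: both rest on the intersection identity (\ref{intersection}) to see that $\mathrm{Cl}(K_{(N)}/F)$ is nontrivial and meets $\mathrm{Cl}(K_{(N)}/H)$ only in $\{C_1\}$, then invoke character duality for the finite abelian quotient $\mathrm{Cl}(N)/\mathrm{Cl}(K_{(N)}/H)\simeq\mathrm{Cl}(\mathcal{O}_K)$ to produce $\rho$ and to read off $(N)_\rho=\mathcal{O}_K$. The only cosmetic difference is that the paper picks an explicit class $C\in\mathrm{Cl}(K_{(N)}/F)\setminus\mathrm{Cl}(K_{(N)}/H)$ and a character not vanishing at its image, whereas you argue by contradiction with the annihilator; these are the same duality fact.
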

\begin{proof}
Since $|\mathrm{Cl}(K_{(N)}/F)|\geq2$ and
$\mathrm{Cl}(K_{(N)}/H)\cap\mathrm{Cl}(K_{(N)}/F)=\{C_1\}$ by (\ref{intersection}), one can
take a class $C\in\mathrm{Cl}(K_{(N)}/F)\setminus\mathrm{Cl}(K_{(N)}/H)$.
Thus, if we let $\mu:\mathrm{Cl}(N)
\rightarrow\mathrm{Cl}(N)/\mathrm{Cl}(K_{(N)}/H)$
be the canonical homomorphism, then there is a character $\psi$ of $\mathrm{Cl}(N)/\mathrm{Cl}(K_{(N)}/H)$
such that $\psi(\mu(C))\neq1$.
\par
Now, defining a character $\rho$ of $\mathrm{Cl}(N)$ by
$\rho=\psi\circ\mu$, we see that it is trivial on $\mathrm{Cl}(K_{(N)}/H)$.
Since
\begin{equation*}
\mathrm{Cl}(N)/\mathrm{Cl}(K_{(N)}/H)
\simeq\mathrm{Cl}(H/K)=\mathrm{Cl}(\mathcal{O}_K),
\end{equation*}
we get
$(N)_\rho=\mathcal{O}_K$.
Moreover, $\rho(C)=\psi(\mu(C))\neq1$ implies that
$\rho$ is nontrivial on $\mathrm{Cl}(K_{(N)}/F)$.
\end{proof}

\begin{proposition}\label{8972}
Assume that
\begin{equation}\label{assumption}
\gcd(72,\,N)\in\{1,\,8,\,9,\,72\}~\textrm{and $F$ is properly contained in $K_{(N)}$}.
\end{equation}
Then, there
is a character $\chi$ of $\mathrm{Cl}(N)$
and an integer $t$
which satisfy the following properties\textup{:}
\begin{enumerate}
\item[\textup{(B1)}] $\chi$ is nontrivial on $\mathrm{Cl}(K_{(N)}/F)$.
\item[\textup{(B2)}] $\gcd(N,\,t)=1$ and $t\not\equiv\pm1\Mod{N}$.
\item[\textup{(B3)}] $S(\overline{\chi},\,\xi_t)\neq0$.
\end{enumerate}
\end{proposition}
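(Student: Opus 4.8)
The plan is to fuse the two characters supplied by Lemmas~\ref{character} and~\ref{nontrivial} into a single character $\chi$ that at once forces the Stickelberger element to be nonzero, is trivial on $\mathrm{Cl}(K_{(N)}/H_N)$, and is nontrivial on $\mathrm{Cl}(K_{(N)}/F)$, and then to pick $t$ so that almost every term of Lemma~\ref{summation} drops out. Since $\gcd(72,N)\in\{1,8,9,72\}$ forces $N\notin\{2,3,4,6\}$, in particular $N\neq2$, so $H\subsetneq H_N$ (class number formula for the order of conductor $N$) and $\mathrm{Cl}(K_{(N)}/H)\setminus\mathrm{Cl}(K_{(N)}/H_N)\neq\emptyset$; pick $C_0$ in it. Applying Lemma~\ref{character} to this $C_0$ gives $\psi$ with (A1), (A3) and $\psi(C_0)\neq1$, and Lemma~\ref{nontrivial} gives $\rho$ that is trivial on $\mathrm{Cl}(K_{(N)}/H)$ (so $(N)_\rho=\mathcal{O}_K$) and nontrivial on $\mathrm{Cl}(K_{(N)}/F)$. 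I would then put $\chi=\psi$ if $\psi$ is nontrivial on $\mathrm{Cl}(K_{(N)}/F)$ and $\chi=\psi\rho$ otherwise: at least one of these is nontrivial on $\mathrm{Cl}(K_{(N)}/F)$, since otherwise $\rho=(\psi\rho)\psi^{-1}$ would be trivial there; this yields (B1). Because $\rho$ is trivial on $\mathrm{Cl}(K_{(N)}/H)\supseteq\mathrm{Cl}(K_{(N)}/H_N)$, the character $\chi$ remains trivial on $\mathrm{Cl}(K_{(N)}/H_N)$, still satisfies $\chi(C_0)\neq1$ (hence is nontrivial on $\mathrm{Cl}(K_{(N)}/H)$), and—since $(N)_\rho=\mathcal{O}_K$—has $(N)_\chi=(N)_\psi$, so every prime ideal factor of $(N)$ divides $(N)_\chi$; consequently $S(\overline{\chi})\neq0$ by Remark~\ref{Eulerfactor}, and $\chi(C_t)=1$ for every integer $t$ prime to $N$ because $C_t\in\mathrm{Cl}(K_{(N)}/H_N)$ by Lemma~\ref{ringGalois}.

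Next I would feed this $\chi$ and an arbitrary $t$ satisfying (B2) into Lemma~\ref{summation}. Since $\chi$ is nontrivial on $\mathrm{Cl}(K_{(N)}/H)$, that lemma and $\chi(C_t)=1$ give $S(\overline{\chi},\xi_t)=\Sigma_++\Sigma_--4S(\overline{\chi})$, where $\Sigma_{\pm}$ is the corresponding summand. The factor $\sum_{A_{\pm}}\overline{\chi}(A_{\pm})$ vanishes unless $\chi$ is trivial on $\mathrm{Cl}(K_{(N)}/K_{(N_{\pm})})$, i.e.\ unless $(N)_\chi\mid(N_{\pm})$; so $\Sigma_{\pm}=0$ when $(N)_\chi\nmid(N_{\pm})$. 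When $(N)_\chi\mid(N_{\pm})$, $\chi$ factors through a nonprincipal character $\chi_{N_{\pm}}$ of $\mathrm{Cl}(N_{\pm})$ with the same conductor, and unfolding the remaining sum through Proposition~\ref{transformation} gives $\Sigma_{\pm}=c_{\pm}\,\chi_{N_{\pm}}(C_{N_{\pm},n_{\pm}})\,S_{(N_{\pm})}(\overline{\chi_{N_{\pm}}})$ with $c_{\pm}=(N/N_{\pm})\,[K_{(N)}:K_{(N_{\pm})}]$ a positive integer. Applying Proposition~\ref{Kronecker} at the two moduli $(N)$ and $(N_{\pm})$—they share the primitive character $\chi_0$ and the element $\gamma$, and both Euler-factor products are empty because every prime ideal of $(N)$, hence of $(N_{\pm})$, divides $(N)_\chi$—shows $S_{(N)}(\overline{\chi})$ and $S_{(N_{\pm})}(\overline{\chi_{N_{\pm}}})$ are determined by $L_{(N)_\chi}(1,\chi_0)$ through the same nonzero formula, so they are equal. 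Also $n_{\pm}$ is prime to $N$ in each case I use below, so $C_{N,n_{\pm}}$ is defined, projects to $C_{N_{\pm},n_{\pm}}$, and $\chi_{N_{\pm}}(C_{N_{\pm},n_{\pm}})=\chi(C_{n_{\pm}})=1$. Hence $\Sigma_{\pm}\in\{0,\,c_{\pm}S(\overline{\chi})\}$, so $S(\overline{\chi},\xi_t)=(\delta_+c_++\delta_-c_--4)S(\overline{\chi})$ with $\delta_{\pm}\in\{0,1\}$, and (B3) comes down to choosing $t$ with $\delta_+c_++\delta_-c_-\neq4$.

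Finally I would choose $t$ according to the shape of $N$. If $N$ has two distinct prime divisors $p_0\neq p_1$, set $a_i=v_{p_i}(N)$ and take (Chinese remainder theorem) $t\equiv-1\Mod{p_0^{a_0}}$, $t\equiv1\Mod{p_1^{a_1}}$, and $t\equiv1$ modulo the remaining part of $N$; since $\gcd(72,N)\in\{1,8,9,72\}$ forces $p^{v_p(N)}\neq2$ for every $p\mid N$, one gets $\gcd(t,N)=1$ and $t\not\equiv\pm1\Mod{N}$, while $p_0\nmid N_+$ and $p_1\nmid N_-$ force $(N)_\chi\nmid(N_{\pm})$, so $S(\overline{\chi},\xi_t)=-4S(\overline{\chi})\neq0$. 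If $N=p^e$ with $p\geq5$, take $t\not\equiv0,\pm1\Mod{p}$: then $N_+=N_-=N$, $c_{\pm}=1$, and $S(\overline{\chi},\xi_t)=-2S(\overline{\chi})\neq0$. If $N=3^e$ with $e\geq2$, take $t\equiv1\Mod{3}$, $t\not\equiv1\Mod{3^e}$: then $N_+=N$ so $\Sigma_+=S(\overline{\chi})$, while $N_-=3^{e'}$ with $e'<e$, and either $\Sigma_-=0$ (giving $-3S(\overline{\chi})$) or $\Sigma_-=c_-S(\overline{\chi})$ with $c_-=3^{3(e-e')}\geq27$ (giving $(c_--3)S(\overline{\chi})$). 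If $N=2^e$ with $e\geq3$, then $t$ odd forces exactly one of $N_{\pm}$ to equal $2^{e-1}$; taking $t\equiv3\Mod{4}$, $t\not\equiv-1\Mod{2^e}$ makes $N_-=2^{e-1}$ and $N_+=2^{e-v}$ with $v\geq2$, and $2^{e-v}\mid2^{e-1}$ forces $\delta_+\leq\delta_-$, so the cases are $\delta_+=\delta_-=0$ (giving $-4S(\overline{\chi})$), $\delta_-=1$ and $\delta_+=0$ (giving $(c_--4)S(\overline{\chi})$ with $c_-=2\,[K_{(2^e)}:K_{(2^{e-1})}]=8$), or $\delta_+=\delta_-=1$ (giving $(c_++4)S(\overline{\chi})$ with $c_+>0$). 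In every case $\delta_+c_++\delta_-c_-\neq4$, so $(\chi,t)$ satisfies (B1)--(B3).

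The hardest part will be the last case distinction for $p\in\{2,3\}$, where $N_{\pm}$ may be a proper divisor of $N$ whose modulus is still divisible by every prime of $(N)$, so that $\Sigma_{\pm}$ need not vanish; there the nonvanishing of $S(\overline{\chi},\xi_t)$ is reduced, via the identity $S_{(N_{\pm})}(\overline{\chi_{N_{\pm}}})=S(\overline{\chi})$ from Proposition~\ref{Kronecker} and (A3), to the arithmetic fact that $\delta_+c_++\delta_-c_-$ cannot equal $4$, which forces one to compute $[K_{(N)}:K_{(N_{\pm})}]$ exactly from the class number formula (e.g.\ $[K_{(2^e)}:K_{(2^{e-1})}]=4$, so $c_-=8\neq4$) rather than to rely on a mere divisibility estimate.
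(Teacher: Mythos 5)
Your proof is correct, but at the decisive step it takes a genuinely different route from the paper's. The paper exploits the freedom in Lemma~\ref{character} to \emph{prescribe} the class $C$ at which $\chi$ is nontrivial: for $\gcd(72,N)\in\{8,72\}$ (resp.\ $=9$) it takes $C$ to be the class of $((N/2)\tau_K+1)$ (resp.\ $((N/3)\tau_K+1)$), which lies in $\mathrm{Cl}(K_{(N)}/K_{(N/2)})\setminus\mathrm{Cl}(K_{(N)}/H_N)$; since $N_\pm\mid N/2$ (resp.\ $N_+\mid N/3$) for its choice of $t$, the sums $\sum_{A_\pm}\overline{\chi}(A_\pm)$ in Lemma~\ref{summation} vanish by orthogonality and only $-2(\chi(C_t)+1)S(\overline{\chi})$ survives. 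You apply Lemma~\ref{character} to an arbitrary $C_0$, so you cannot force those sums to vanish, and you compensate by evaluating the surviving terms: you unfold $\Sigma_\pm$ to $c_\pm\,\chi_{N_\pm}(C_{N_\pm,n_\pm})\,S_{(N_\pm)}(\overline{\chi_{N_\pm}})$ via Proposition~\ref{transformation}, prove the cross-modulus identity $S_{(N_\pm)}(\overline{\chi_{N_\pm}})=S_{(N)}(\overline{\chi})$ by comparing two instances of Proposition~\ref{Kronecker} whose Euler products are both empty thanks to (A3), and compute $[K_{(N)}:K_{(N_\pm)}]$ to check $\delta_+c_++\delta_-c_-\neq4$; all of these steps are sound. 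Your case split (two distinct prime factors versus prime powers $2^e$, $3^e$, $p^e$) also differs from the paper's split by $\gcd(72,N)$, and your CRT choice of $t$ in the composite case is closer in spirit to Lemma~\ref{tlemma} and Proposition~\ref{otherthan8972}, which the paper reserves for the complementary values of $\gcd(72,N)$. What the paper's route buys is economy: no degree formulas and no comparison of Stickelberger elements across moduli. What yours buys is robustness: you use only (A1), (A3) and nontriviality of $\chi$ on $\mathrm{Cl}(K_{(N)}/H)$, not the ability to prescribe $C$ inside $\mathrm{Cl}(K_{(N)}/K_{(N/2)})$ or $\mathrm{Cl}(K_{(N)}/K_{(N/3)})$, at the price of the extra identity and the explicit computations $[K_{(2^e)}:K_{(2^{e-1})}]=4$ and $[K_{(3^e)}:K_{(3^{e'})}]=3^{2(e-e')}$, which you rightly flag as the delicate point.
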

\begin{proof}
We divide the proof into three cases in accordance with $\gcd(72,N)$.
\begin{enumerate}
\item[Case 1.] First, consider the case where $\gcd(72,\,N)\in\{8,\,72\}$.
Let $C$ be the class in $\mathrm{Cl}(N)$
containing the ideal $((N/2)\tau_K+1)$.
We observe by Lemma \ref{ringGalois} that
\begin{equation}\label{Cnotin}
C\in\mathrm{Gal}(K_{(N)}/K_{(N/2)})\setminus
\mathrm{Gal}(K_{(N)}/H_N).
\end{equation}
Then, by Lemma \ref{character} there is a character $\chi$ of $\mathrm{Cl}(N)$
satisfying (A1)--(A3). If $\chi$ is trivial on $\mathrm{Cl}(K_{(N)}/F)$, then we replace $\chi$ by $\chi\rho$, where $\rho$ is a character of $\mathrm{Cl}(N)$ given in Lemma \ref{nontrivial}. 
The new character $\chi$ is nontrivial on $\mathrm{Cl}(K_{(N)}/F)$ and preserves the properties (A1)--(A3).
Take any integer $t$ such that
$\gcd(N,\,t)=1$ and $t\not\equiv\pm1\Mod{N}$.
Since $N$, $t+1$ and $t-1$ are all even, we see that
$N_+$ and $N_-$ divide $N/2$, from which it follows that
\begin{equation}\label{2inclusion}
\mathrm{Cl}(K_{(N)}/K_{(N/2)})
\subseteq\mathrm{Cl}(K_{(N)}/K_{(N_+)})\cap\mathrm{Cl}(K_{(N)}/K_{(N_-)}).
\end{equation}
We then achieve that
\begin{eqnarray*}
S(\overline{\chi},\,\xi_t)
&=&(N/N_+)
\sum_{\substack{B_+\in\mathrm{Cl}(N)\\\Mod{\mathrm{Cl}(K_{(N)}/K_{(N_+)})}}}
\overline{\chi}(B_+)\ln\left|g_{(N_+)}(C_{N_+,\,n_+})^{\sigma(B_+)}\right|
\sum_{A_+\in\mathrm{Cl}(K_{(N)}/K_{(N_+)})}\overline{\chi}(A_+)\\
&&+(N/N_-)\sum_{\substack{B_-\in\mathrm{Cl}(N)\\\Mod{\mathrm{Cl}(K_{(N)}/K_{(N_-)})}}}
\overline{\chi}(B_-)\ln\left|g_{(N_-)}(C_{N_-,\,n_-})^{\sigma(B_-)}\right|
\sum_{A_-\in\mathrm{Cl}(K_{(N)}/K_{(N_-)})}\overline{\chi}(A_-)\\
&&-2(\chi(C_t)+1)S(\overline{\chi})\quad\textrm{by Lemma \ref{summation}}\\
&=&-2(\chi(C_t)+1)S(\overline{\chi})
\quad\textrm{since $\chi$ is nontrivial on
$\mathrm{Cl}(K_{(N)}/K_{(N_+)})$ and
$\mathrm{Cl}(K_{(N)}/K_{(N_-)})$}\\
&&\hspace{4.0cm}\textrm{by (\ref{Cnotin}), (\ref{2inclusion}) and (A2)}\\
&=&-4S(\overline{\chi})\quad\textrm{by (A1) and Lemma  \ref{ringGalois}}\\
&\neq&0\quad\textrm{by Proposition \ref{Kronecker} and Remark \ref{Eulerfactor}}.
\end{eqnarray*}
\item[Case 2.] Second, consider the case where $\gcd(72,\,N)=9$. If we let
$C$ be the class in $\mathrm{Cl}(N)$ containing
the ideal $((N/3)\tau_K+1)$,
then we see that
\begin{equation}\label{Cnotin2}
C\in\mathrm{Gal}(K_{(N)}/K_{(N/3)})\setminus
\mathrm{Gal}(K_{(N)}/H_N)
\end{equation}
by Lemma \ref{ringGalois}.
By Lemma \ref{character}, there exists a character $\chi$ of $\mathrm{Cl}(N)$
satisfying (A1)--(A3).
In a similar way to the above Case 1, we
may assume that $\chi$ is nontrivial on $\mathrm{Cl}(K_{(N)}/F)$.
Take $t=2$, and then we get
\begin{equation*}
n_+=1,~N_+=\frac{N}{3}\quad
\textrm{and}\quad
n_-=1,~N_-=N.
\end{equation*}
So, we derive that
\begin{eqnarray*}
S(\overline{\chi},\,\xi_t)
&=&3\sum_{\substack{B_+\in\mathrm{Cl}(N)\\\Mod{\mathrm{Cl}(K_{(N)}/K_{(N/3)})}}}
\overline{\chi}(B_+)\ln\left|g_{(N/3)}(C_{(N/3),\,1})^{\sigma(B_+)}\right|
\sum_{A_+\in\mathrm{Cl}(K_{(N)}/K_{(N/3)})}\overline{\chi}(A_+)\\
&&+S(\overline{\chi})-2(\chi(C_t)+1)S(\overline{\chi})\quad\textrm{by Lemma \ref{summation}}\\
&=&-(2\chi(C_t)+1)S(\overline{\chi})
\quad\textrm{since $\chi$ is nontrivial
on $\mathrm{Cl}(K_{(N)}/K_{(N/3)})$
by (\ref{Cnotin2}) and (A2)}\\
&=&-3S(\overline{\chi})\quad\textrm{by (A1) and Lemma  \ref{ringGalois}}\\
&\neq&0\quad\textrm{by Proposition \ref{Kronecker} and Remark \ref{Eulerfactor}}.
\end{eqnarray*}
\item[Case 3.] Lastly, consider the case where $\gcd(72,\,N)=1$.
By Lemma \ref{character}, there is a character $\chi$ of $\mathrm{Cl}(N)$
satisfying (A1)--(A3) for any chosen
$C\in\mathrm{Cl}(K_{(N)}/H)\setminus\mathrm{Cl}(K_{(N)}/H_N)$.
In like manner as above, we
may assume that $\chi$ is nontrivial on $\mathrm{Cl}(K_{(N)}/F)$.
Take $t=2$, then it follows that
\begin{equation*}
n_+=3,~N_+=N\quad
\textrm{and}\quad
n_-=1,~N_-=N.
\end{equation*}
Therefore, we obtain
\begin{eqnarray*}
S(\overline{\chi},\,\xi_t)
&=&\chi(C_{n_+})S(\overline{\chi})
+S(\overline{\chi})-2(\chi(C_t)+1)S(\overline{\chi})\quad\textrm{by Lemma \ref{summation}}\\
&=&-2S(\overline{\chi})\quad\textrm{by (A1) and Lemma  \ref{ringGalois}}\\
&\neq&0\quad\textrm{by Proposition \ref{Kronecker} and Remark \ref{Eulerfactor}}.
\end{eqnarray*}
This proves the lemma.
\end{enumerate}
\end{proof}

\begin{lemma}\label{tlemma}
Assume that 
\begin{equation}\label{234612182436}
\gcd(72,\,N)\in\{2,\,3,\,4,\,6,\,12,\,18,\,24,\,36\}\quad\textrm{and}\quad
N\neq2,\,3,\,4,\,6.
\end{equation}
Then, there exists an integer $t$ satisfying the following properties\textup{:}
\begin{enumerate}
\item[\textup{(C1)}] $\gcd(N,\,t)=1$ and $t\not\equiv\pm1\Mod{N}$.
\item[\textup{(C2)}] There are prime factors $p_+$, $p_-$ of $N$ \textup{(}not necessarily distinct\textup{)}
such that $\gcd(p_{\pm},\,N_\pm)=1$ \textup{(}Note that $N_\pm$ depends on the choice of $t$\textup{)}.
\end{enumerate}
\end{lemma}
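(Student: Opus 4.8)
The plan is to construct a suitable $t$ explicitly by prescribing its residues modulo the prime-power components of $N$ and then invoking the Chinese Remainder Theorem. First I would unwind the two conditions. Write $v_\ell$ for the $\ell$-adic valuation; since $N_\pm=N/\gcd(N,\,t\pm1)$, for a prime $p\mid N$ the requirement $\gcd(p,\,N_\pm)=1$ is equivalent to $p^{v_p(N)}\mid t\pm1$, while $t\not\equiv\pm1\Mod N$ is equivalent to $N_\pm\neq1$. Hence it suffices to find $t$ with $\gcd(t,\,N)=1$ such that $t+1$ is divisible by the full $p_+$-part of $N$ for some prime $p_+\mid N$ and $t-1$ by the full $p_-$-part of $N$ for some prime $p_-\mid N$, subject to $t\not\equiv\pm1\Mod N$.

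Next I would translate the hypothesis arithmetically. Since $\gcd(72,\,N)=2^{\min(v_2(N),\,3)}3^{\min(v_3(N),\,2)}$, the condition $\gcd(72,\,N)\in\{2,\,3,\,4,\,6,\,12,\,18,\,24,\,36\}$ says precisely that $2\mid N$ or $3\mid N$, that $v_2(N)=0$ forces $v_3(N)=1$, and that $v_2(N)\geq3$ forces $v_3(N)=1$ (the excluded values $\gcd(72,\,N)\in\{1,\,8,\,9,\,72\}$ being exactly the ones treated in Proposition \ref{8972}). Together with $N\notin\{2,\,3,\,4,\,6\}$, this lets me split into three cases according to $v_2(N)$.

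If $v_2(N)=1$, every $t$ with $\gcd(t,\,N)=1$ is odd, so $2\mid t\pm1$ and hence $2\nmid N_\pm$; thus $p_+=p_-=2$ settles (C2) for any admissible $t$, and since $\phi(N)>2$ (because $N\notin\{1,\,2,\,3,\,4,\,6\}$) there is a $t$ with $\gcd(t,\,N)=1$ and $t\not\equiv\pm1\Mod N$, giving (C1). If $v_2(N)\geq2$, then under the hypothesis $N$ is not a power of $2$ (the case $N=4$ is excluded, and $\gcd(72,\,2^k)=8$ for $k\geq3$), so $N$ has an odd prime factor $q$; by the Chinese Remainder Theorem I would choose $t$ with $t\equiv-1\Mod{2^{v_2(N)}}$, $t\equiv1\Mod{q^{v_q(N)}}$, and $t$ a unit modulo every remaining prime-power divisor of $N$, so that $\gcd(t,\,N)=1$. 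Since $2^{v_2(N)}\geq4$ does not divide $2$ and $q>2$, one has $t\not\equiv1\Mod{2^{v_2(N)}}$ and $t\not\equiv-1\Mod q$, hence $t\not\equiv\pm1\Mod N$; the prescribed divisibilities give (C2) with $p_+=2$, $p_-=q$. Finally, if $v_2(N)=0$, the hypothesis forces $v_3(N)=1$, so $3$ exactly divides $N$, and since $N\neq3$ the complementary part $N/3$ is coprime to $6$ and exceeds $1$, hence has a prime factor $q\geq5$; choosing $t$ with $t\equiv-1\Mod3$, $t\equiv1\Mod{q^{v_q(N)}}$ and $t$ a unit on the remaining prime-power divisors, the same argument yields (C1) and (C2) with $p_+=3$, $p_-=q$.

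The step I expect to be the real obstacle is the small-$N$ boundary of the last two cases --- values such as $N=12$, $24$, $36$, where $N$ has no prime factor $\geq5$ --- because there one cannot separate $t$ from $\pm1$ using a large auxiliary prime and must instead rely on a genuine prime power ($2^{v_2(N)}$ or $3$) being at least $3$ and so not dividing $2$. Making the case division by $\gcd(72,\,N)$ precise enough that exactly these cases are the ones in which such a usable prime power is guaranteed is where the care is needed; the remainder is a routine assembly via the Chinese Remainder Theorem.
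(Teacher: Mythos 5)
Your proof is correct, and all three cases check out: the reduction of (C2) to ``the full $p_\pm$-part of $N$ divides $t\pm1$'' is right, the translation of $\gcd(72,N)\in\{2,3,4,6,12,18,24,36\}$ into conditions on $v_2(N),v_3(N)$ is exact, and in each case the CRT choice of $t$ visibly satisfies (C1) and (C2). The paper proves the lemma in the same constructive spirit but organizes it as an explicit table: separate entries for $N=12,18,24,36$, for $N=2\ell$ and $N=4\ell$ with $\gcd(6,\ell)=1$, $\ell>1$, and a final CRT-type entry $t\equiv1\Mod{2^a\ell}$, $t\equiv-1\Mod{3^b}$ for $N=2^a3^b\ell$; your version instead splits uniformly by $v_2(N)$ and lets CRT do all the work. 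What your route buys is robustness and brevity: for instance, when $v_2(N)=1$ you observe that \emph{any} admissible $t$ works with $p_+=p_-=2$, which painlessly covers values such as $N=2\cdot3^b$ with $b\geq3$ (e.g.\ $N=54$) that the paper's last table row, read literally, does not reach (it presupposes an auxiliary factor $\ell>1$, and with $\ell=1$ its congruences would force $t\equiv-1\Mod{N}$ there); similarly your $v_2(N)\geq2$ case supplies a valid $p_-$ for $N$ like $48$ or $108$ where the table's prescription ``a prime factor of $\ell$'' is empty. The paper's table, conversely, gives concrete small values of $t$ (useful if one wants explicit generators), but your worry about the boundary cases $N=12,24,36$ is unfounded in your own scheme: they fall under your $v_2(N)\geq2$ case with $q=3$, exactly because the $2$-part is at least $4$, as you argued.
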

\begin{proof}
Let $\ell$ be an integer such that $\ell>1$ and $\gcd(6,\,\ell)=1$. 
One can take $t$ 
as listed in Table \ref{table1}.

\begin{table}[h]
\begin{center}
\begin{tabular}{c|c|c|c|c|c}\hline
$N$ & $t$ & $N_+$ & $N_-$ & $p_+$ & $p_-$ \\\hline\hline
$12$ & $5$ & $2$ & $3$ & $3$ & $2$ \\\hline
$18$ & $5$ & $3$ & $9$ & $2$ & $2$ \\\hline
$24$ & $7$ & $3$ & $4$ & $2$ & $3$ \\\hline
$36$ & $17$ & $2$ & $9$ & $3$ & $2$ \\\hline
$2\ell$ & $\ell+2$ & $\ell$ & $\ell$ & $2$ & $2$ \\\hline
$4\ell$ & $2\ell+1$ & $\ell$ & $2$ & $2$ & a prime factor of $\ell$ \\\hline
$\begin{array}{c}
2^a3^b\ell~\textrm{with}\\
a\geq0,~b\geq1
\end{array}$ & 
$\begin{array}{c}
\textrm{a solution of}\\
\left\{\begin{array}{l}
x\equiv1\Mod{2^a\ell},\\
x\equiv-1\Mod{3^b}
\end{array}\right.
\end{array}$ & 
a divisor of $2^a\ell$ & a divisor of $3^b$ & $3$ &
a prime factor of $\ell$\\\hline
 \end{tabular}
\caption{An integer $t$ satisfying (C1) and (C2)}\label{table1}
\end{center}
\end{table}
\end{proof}

Let
$(N)=\prod_{\mathfrak{p}}\mathfrak{p}^{n_\mathfrak{p}}$ be
the prime ideal factorization of $(N)$.
Then we get
\begin{equation*}
[K_{(N)}:H]=\frac{\omega(N)}{2}\prod_{\mathfrak{p}\,|\,(N)}
\left(\mathrm{N}_{K/\mathbb{Q}}(\mathfrak{p})-1\right)
\mathrm{N}_{K/\mathbb{Q}}(\mathfrak{p})^{n_\mathfrak{p}-1},
\end{equation*}
where $\omega(N)$ is the number of roots of unity in $K$
which are congruent to $1$ modulo $(N)$ (\cite[Theorem 1 in Chapter VI]{Lang94}).
One can then readily deduce that
\begin{eqnarray*}
K_{(N)}=K_{(M)}~
\textrm{for a proper divisor $M$ of $N$}\quad
\Longleftrightarrow~
2\,\|\,N~\textrm{and $2$ splits in $K$}.
\end{eqnarray*}
In this case, we have
\begin{equation}\label{reduce}
K_{(N)}=K_{(N/2)}.
\end{equation}
Furthermore, it is well known that
\begin{equation}\label{ringdegree}
[H_N:H]=
N\prod_{p\,|\,N}\left(1-\left(\frac{d_K}{p}\right)\frac{1}{p}\right),
\end{equation}
where $(d_K/p)$ is the Legendre symbol for an odd prime $p$,
and $(d_K/2)$ is the Kronecker symbol (\cite[Theorem 7.24]{Cox}).

\begin{lemma}\label{primepower}
Assume that if $2\,\|\,N$, then $2$ does not split in $K$.
Let $p$ be a prime factor of $N$ with $p^e\,\|\,N$.
Then, there is a nontrivial character $\chi_p$ of $\mathrm{Cl}(N)$ satisfying the following properties\textup{:}
\begin{itemize}
\item It is trivial on $\mathrm{Cl}(K_{(N)}/H_{{p^e}})$,
and so $(N)_{\chi_p}$ divides $(p^e)$.
\item $(N)_{\chi_p}$ is divisible by
every prime ideal factor of $(p)$.
\end{itemize}
\end{lemma}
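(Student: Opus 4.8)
The plan is to obtain $\chi_p$ by pulling back a character from the ray class group modulo $(p^e)$, where $p^e\,\|\,N$. Since $(p^e)\,|\,(N)$, there is a natural surjection $\pi:\mathrm{Cl}(N)\twoheadrightarrow\mathrm{Cl}(p^e)$; conductors are preserved under composition with $\pi$, and $\pi$ maps $\mathrm{Cl}(K_{(N)}/H_{p^e})$ into $\mathrm{Cl}(K_{(p^e)}/H_{p^e})$ because $H_{p^e}\subseteq K_{(p^e)}\subseteq K_{(N)}$. Hence it suffices to construct a nontrivial character $\psi$ of $\mathrm{Cl}(p^e)$ that is trivial on $\mathrm{Cl}(K_{(p^e)}/H_{p^e})$ and whose conductor is divisible by every prime ideal factor of $(p)$, and then put $\chi_p=\psi\circ\pi$; the divisibility $(N)_{\chi_p}\,|\,(p^e)$ is automatic since $\psi$ is a character modulo $(p^e)$.

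First I would recall the group-theoretic setup. Write $(p^e)=\prod_{\mathfrak{p}\,|\,p}\mathfrak{p}^{a_\mathfrak{p}}$, so that $a_\mathfrak{p}=e$ if $p$ is unramified in $K$ and $a_\mathfrak{p}=2e$ if $p$ ramifies. By the Chinese remainder theorem $(\mathcal{O}_K/p^e\mathcal{O}_K)^*\cong\prod_{\mathfrak{p}\,|\,p}(\mathcal{O}_K/\mathfrak{p}^{a_\mathfrak{p}})^*$, and since $K$ is neither $\mathbb{Q}(\sqrt{-1})$ nor $\mathbb{Q}(\sqrt{-3})$ we have $\mathcal{O}_K^*=\{\pm1\}$; hence the standard exact sequences for the ray class group and for the ring class field give
\begin{equation*}
\mathrm{Cl}(K_{(p^e)}/H)\cong(\mathcal{O}_K/p^e\mathcal{O}_K)^*/\{\pm1\}
\quad\textrm{and}\quad
\mathrm{Gal}(H_{p^e}/H)\cong(\mathcal{O}_K/p^e\mathcal{O}_K)^*/(\mathbb{Z}/p^e\mathbb{Z})^*,
\end{equation*}
where $\mathbb{Z}/p^e\mathbb{Z}$ is embedded in $\mathcal{O}_K/p^e\mathcal{O}_K$ and, under the first isomorphism, $\mathrm{Cl}(K_{(p^e)}/H_{p^e})$ is the image of $(\mathbb{Z}/p^e\mathbb{Z})^*$. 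Combined with (\ref{ringdegree}) this shows that $[H_{p^e}:H]=1$ precisely when $p^e=2$ and $2$ splits in $K$; the hypothesis on $2\,\|\,N$ rules this case out (and in it the lemma would in fact be false, since every character trivial on $\mathrm{Cl}(K_{(p^e)}/H_{p^e})=\mathrm{Cl}(K_{(p^e)}/H)$ would factor through $\mathrm{Gal}(H/K)$, hence be unramified).

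It therefore suffices to produce a character $\psi_0$ of $(\mathcal{O}_K/p^e\mathcal{O}_K)^*$ that is trivial on the subgroup $(\mathbb{Z}/p^e\mathbb{Z})^*$ but nontrivial on each factor $(\mathcal{O}_K/\mathfrak{p}^{a_\mathfrak{p}})^*$. Indeed, triviality on $(\mathbb{Z}/p^e\mathbb{Z})^*\supseteq\{\pm1\}$ lets $\psi_0$ descend to a character of $\mathrm{Cl}(K_{(p^e)}/H)$ killing $\mathrm{Cl}(K_{(p^e)}/H_{p^e})$; any extension $\psi$ of it to $\mathrm{Cl}(p^e)$ exists (a character of a subgroup of a finite abelian group extends), is nontrivial, and still kills $\mathrm{Cl}(K_{(p^e)}/H_{p^e})$; and nontriviality of $\psi_0$ on the factor $(\mathcal{O}_K/\mathfrak{p}^{a_\mathfrak{p}})^*$ forces $\psi$ to be ramified at $\mathfrak{p}$, because that factor injects, modulo $\{\pm1\}$, into $\mathrm{Cl}(K_{(p^e)}/H)$, on which $\psi$ agrees with $\psi_0$. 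To build $\psi_0$: if $p$ is inert or ramified there is a single prime above $p$, the quotient $(\mathcal{O}_K/p^e\mathcal{O}_K)^*/(\mathbb{Z}/p^e\mathbb{Z})^*\cong\mathrm{Gal}(H_{p^e}/H)$ is nontrivial, and any nontrivial character of it, pulled back to $(\mathcal{O}_K/p^e\mathcal{O}_K)^*$, does the job; if $p$ splits then $(\mathcal{O}_K/p^e\mathcal{O}_K)^*\cong(\mathbb{Z}/p^e\mathbb{Z})^*\times(\mathbb{Z}/p^e\mathbb{Z})^*$ with $(\mathbb{Z}/p^e\mathbb{Z})^*$ embedded diagonally, and since $p^e\geq3$ in this case (the value $p^e=2$ being excluded by hypothesis) we may choose a nontrivial character $\phi$ of $(\mathbb{Z}/p^e\mathbb{Z})^*$ and set $\psi_0=\phi\otimes\phi^{-1}$, which is trivial on the diagonal and nontrivial on each factor.

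The step I expect to require the most care is the passage from ``$\psi_0$ nontrivial on the $\mathfrak{p}$-component of $(\mathcal{O}_K/p^e\mathcal{O}_K)^*$'' to ``$\chi_p$ ramified at $\mathfrak{p}$'', that is, checking that quotienting by the global unit $-1$ does not collapse the relevant component; this is clear when $p^e>2$ and vacuous in the single-prime case, but is exactly what breaks down when $p^e=2$ and $2$ splits — the configuration the hypothesis on $2\,\|\,N$ is designed to exclude.
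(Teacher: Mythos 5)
Your construction is correct, and it is genuinely more self-contained than what the paper does: the paper's entire proof is the observation that the hypothesis forces $[H_{p^e}:H]\geq 2$ via (\ref{ringdegree}), followed by a citation of Lemma 3.3 of \cite{K-S-Y2}, which is where the actual character is built. You instead carry out the construction explicitly: identifying $\mathrm{Cl}(K_{(p^e)}/H)$ with $(\mathcal{O}_K/p^e\mathcal{O}_K)^*/\{\pm1\}$ and $\mathrm{Cl}(K_{(p^e)}/H_{p^e})$ with the image of $(\mathbb{Z}/p^e\mathbb{Z})^*$, splitting $(\mathcal{O}_K/p^e\mathcal{O}_K)^*$ by CRT, and taking either the pullback of a nontrivial character of $\mathrm{Gal}(H_{p^e}/H)$ (inert/ramified case) or the twist $\phi\otimes\phi^{-1}$ killing the diagonal (split case). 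All the delicate points are handled: the identification of the kernel of $\mathrm{Cl}(p^e)\rightarrow\mathrm{Cl}((p^e)\mathfrak{p}^{-a_\mathfrak{p}})$ with the image of the $\mathfrak{p}$-component, so that nontriviality of $\psi_0$ there forces $\mathfrak{p}\,|\,(N)_{\chi_p}$; the invariance of the conductor under pullback along $\mathrm{Cl}(N)\twoheadrightarrow\mathrm{Cl}(p^e)$; and the correct diagnosis that the only failure occurs for $p^e=2$ with $2$ split, which is exactly what the hypothesis excludes. What your approach buys is independence from the external reference (and a transparent explanation of \emph{why} the hypothesis on $2\,\|\,N$ is needed); what the paper's approach buys is brevity, at the cost of deferring the real content to \cite{K-S-Y2}.
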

\begin{proof}
Note that the assumption implies $[H_{{p^e}}:H]\geq2$ by \textup{(\ref{ringdegree})}.
Therefore, the lemma is an immediate consequence of \cite[Lemma 3.3]{K-S-Y2}.
\end{proof}

\begin{proposition}\label{otherthan8972}
Assume that
\begin{equation*}
\textrm{$N$ satisfies \textup{(\ref{234612182436})}}~\textrm{and $F$ is properly contained in $K_{(N)}$}.
\end{equation*}
Under this assumption instead of \textup{(\ref{assumption})}, \textup{Proposition \ref{8972}} also holds.
\end{proposition}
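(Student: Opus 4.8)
The plan is to follow the architecture of the proof of Proposition \ref{8972}, replacing the appeal to Lemma \ref{character} by one to Lemma \ref{primepower} and using Lemma \ref{tlemma} to select the auxiliary integer $t$.

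First I would reduce to the situation in which Lemma \ref{primepower} applies, namely where $2\,\|\,N$ forces $2$ to be non-split in $K$. If instead $2\,\|\,N$ and $2$ splits in $K$, then $K_{(N)}=K_{(N/2)}$ by (\ref{reduce}); here $N/2$ is odd and $N/2\geq 5$ (as $N\neq 2,\,6$), so $N/2\notin\{2,\,3,\,4,\,6\}$, while $\gcd(72,\,N/2)=\gcd(72,\,N)/2\in\{1,\,3,\,9\}$, so $N/2$ satisfies the hypothesis of Proposition \ref{8972} when $\gcd(72,\,N/2)\in\{1,\,9\}$ and that of the present proposition when $\gcd(72,\,N/2)=3$; one then transports the conclusion back along $K_{(N)}=K_{(N/2)}$. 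I expect this descent to be the main obstacle, since it requires matching the level-$N$ objects occurring in (B2) and (B3)---the invariant $\xi_t$ and the class $C_t$, and also the field $F=K(h(1/N))$ in (B1)---with the corresponding level-$N/2$ data under the identification $K_{(N)}=K_{(N/2)}$. So from now on assume that $2\,\|\,N$ implies $2$ is non-split in $K$.

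Now apply Lemma \ref{tlemma} to get an integer $t$ satisfying (C1) and (C2); in particular (B2) holds. For each prime $p\mid N$ write $p^{e_p}\,\|\,N$ and let $\chi_p$ be the character of $\mathrm{Cl}(N)$ produced by Lemma \ref{primepower}: it is trivial on $\mathrm{Cl}(K_{(N)}/H_{p^{e_p}})$, its conductor divides $(p^{e_p})$, and $(N)_{\chi_p}$ is divisible by every prime ideal factor of $(p)$. Put $\chi=\prod_{p\mid N}\chi_p$. Since the conductors $(N)_{\chi_p}$ lie over pairwise disjoint sets of rational primes, $(N)_\chi=\prod_{p\mid N}(N)_{\chi_p}$ is divisible by every prime ideal factor of $(N)$; hence $(N)_\chi\neq\mathcal{O}_K$, so $\chi$ is nontrivial on $\mathrm{Cl}(K_{(N)}/H)$, and Proposition \ref{Kronecker} with Remark \ref{Eulerfactor} yields $S(\overline{\chi})\neq 0$. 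Because $H_{p^{e_p}}\subseteq H_N$ for every $p\mid N$, each $\chi_p$, hence $\chi$, is trivial on $\mathrm{Cl}(K_{(N)}/H_N)$; as $C_t\in\mathrm{Cl}(K_{(N)}/H_N)$ by Lemma \ref{ringGalois}, we obtain $\chi(C_t)=1$. Finally, $p_+\nmid N_+$ implies that no prime ideal factor of $(p_+)$ divides $(N_+)$, so $(N)_\chi\nmid(N_+)$, i.e.\ $\chi$ does not factor through $\mathrm{Cl}(N_+)$, and therefore $\chi$ is nontrivial on $\mathrm{Cl}(K_{(N)}/K_{(N_+)})$; likewise, using $p_-$, on $\mathrm{Cl}(K_{(N)}/K_{(N_-)})$.

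To achieve (B1), if $\chi$ is trivial on $\mathrm{Cl}(K_{(N)}/F)$ I would replace $\chi$ by $\chi\rho$, where $\rho$ is the character from Lemma \ref{nontrivial} (which exists since $F$ is properly contained in $K_{(N)}$). As $(N)_\rho=\mathcal{O}_K$ we have $(N)_{\chi\rho}=(N)_\chi$, and as $\rho$ is trivial on $\mathrm{Cl}(K_{(N)}/H)$ it is trivial on the subgroups $\mathrm{Cl}(K_{(N)}/H_N)$ and $\mathrm{Cl}(K_{(N)}/K_{(N_\pm)})$ contained in it, so every property established above survives while $\chi$ becomes nontrivial on $\mathrm{Cl}(K_{(N)}/F)$, giving (B1). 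For (B3), Lemma \ref{summation} is applicable because $\chi$ is nontrivial on $\mathrm{Cl}(K_{(N)}/H)$, and the first two summands on the right-hand side of Lemma \ref{summation} vanish because $\chi$ is nontrivial on $\mathrm{Cl}(K_{(N)}/K_{(N_+)})$ and on $\mathrm{Cl}(K_{(N)}/K_{(N_-)})$; hence
$$S(\overline{\chi},\,\xi_t)=-2(\chi(C_t)+1)S(\overline{\chi})=-4\,S(\overline{\chi})\neq 0,$$
which is (B3). Thus (B1), (B2) and (B3) all hold. In brief, Lemma \ref{primepower} supplies here the character that Lemma \ref{character} supplied in Proposition \ref{8972}, and Lemma \ref{tlemma} provides an admissible $t$ for which the first two Siegel-Ramachandra terms of Lemma \ref{summation} disappear.
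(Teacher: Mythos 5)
Your proof is correct and follows essentially the same route as the paper's: build $\chi=\prod_{p\mid N}\chi_p$ from Lemma \ref{primepower}, twist by $\rho$ from Lemma \ref{nontrivial} if needed, take $t$ from Lemma \ref{tlemma}, and use (C2) together with the divisibility of $(N)_\chi$ by all prime ideal factors of $(N)$ to annihilate the first two terms of Lemma \ref{summation}, yielding $S(\overline{\chi},\,\xi_t)=-4S(\overline{\chi})\neq0$. The only place you go beyond the paper is your concern about the case $2\,\|\,N$ with $2$ split in $K$: the paper's proof silently invokes Lemma \ref{primepower} without checking its hypothesis there, but the proposition is only ever applied (in the proof of Theorem \ref{main}) when $2\nparallel N$ or $2$ is non-split, so your reduction to that situation matches the intended scope and your flagged ``descent'' is never actually needed.
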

\begin{proof}
Let
\begin{equation*}
\chi=\prod_{p\,|\,N}\chi_p,
\end{equation*}
where $\chi_p$ is a character of $\mathrm{Cl}(N)$ given
in Lemma \ref{primepower} for each prime factor $p$ of $N$.
If $\chi$ is trivial on $\mathrm{Cl}(K_{(N)}/F)$, then
we replace $\chi$ by $\chi\rho$ where
$\rho$ is a character of $\mathrm{Cl}(N)$ given
in Lemma \ref{nontrivial}.
Then, $\chi$ satisfies the following properties:
\begin{enumerate}
\item[(i)] It is trivial on $\mathrm{Cl}(K_{(N)}/H_N)$.
\item[(ii)] It is nontrivial on $\mathrm{Cl}(K_{(N)}/F)$.
\item[(iii)] $(N)_\chi$ is divisible by every
prime ideal factor of $(N)$.
\end{enumerate}
Now, take an integer $t$
satisfying (C1) and (C2) in Lemma \ref{tlemma}.
We then derive that
\begin{eqnarray*}
S(\overline{\chi},\,\xi_t)
&=&
(N/N_+)\sum_{\substack{B_+\in\mathrm{Cl}(N)\\\Mod{\mathrm{Cl}(K_{(N)}/K_{(N_+)})}}}
\overline{\chi}(B_+)\ln\left|g_{(N_+)}(C_{N_+,\,n_+})^{\sigma(B_+)}\right|
\sum_{A_+\in\mathrm{Cl}(K_{(N)}/K_{(N_+)})}\overline{\chi}(A_+)\\
&&+(N/N_-)\sum_{\substack{B_-\in\mathrm{Cl}(N)\\\Mod{\mathrm{Cl}(K_{(N)}/K_{(N_-)})}}}
\overline{\chi}(B_-)\ln\left|g_{(N_-)}(C_{N_-,\,n_-})^{\sigma(B_-)}\right|
\sum_{A_-\in\mathrm{Cl}(K_{(N)}/K_{(N_-)})}\overline{\chi}(A_-)\\
&&-2(\chi(C_t)+1)S(\overline{\chi})\quad\textrm{by Lemma \ref{summation}}\\
&=&-4S(\overline{\chi})\quad\textrm{because $\chi$ is nontrivial
on $\mathrm{Cl}(K_{(N)}/K_{(N_+)})$
and $\mathrm{Cl}(K_{(N)}/K_{(N_-)})$}\\
&&\hspace{2.1cm}\textrm{by (iii) and (C2) and $\chi(C_t)=1$ by (i) and Lemma \ref{ringGalois}}\\
&\neq&0\quad\textrm{by Proposition \ref{Kronecker}, Remark \ref{Eulerfactor} and (iii)}.
\end{eqnarray*}
\end{proof}

\section {Main theorem}

Now, we are ready to prove our main theorem.

\begin{theorem}\label{main}
Let $K$ be an imaginary quadratic field other than  
$\mathbb{Q}(\sqrt{-1})$ and $\mathbb{Q}(\sqrt{-3})$, and let $N>1$ be an integer  such that $N\neq2,\,3,\,4,\,6$.
Then we have
\begin{equation*}
K_{(N)}=\left\{\begin{array}{ll}
K(h(1/N)) & \textrm{if $2\nparallel N$ or $2$ does not split in $K$},\\
K(h(2/N)) & \textrm{otherwise}.
\end{array}\right.
\end{equation*}
\end{theorem}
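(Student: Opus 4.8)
The plan is to prove the theorem by contradiction: suppose $F := K(h(1/N))$ (or $K(h(2/N))$ in the exceptional case) is properly contained in $K_{(N)}$, and derive an impossibility by combining the character-theoretic machinery of Section 4 with a Galois-descent argument on the invariant $\xi_t$. First I would reduce to the case that the relevant Weber value equals $h(1/N)$: by \eqref{reduce}, when $2\,\|\,N$ and $2$ splits in $K$ one has $K_{(N)}=K_{(N/2)}$, and $h(2/N)=f_{[0,\,1/(N/2)]}(\tau_K)$ corresponds to the generator of the ray class field of conductor $N/2$; so after this reduction it suffices to treat $F=K(h(1/N))$ under the hypothesis that if $2\,\|\,N$ then $2$ does not split in $K$, exactly the hypothesis under which Proposition~\ref{otherthan8972} (via Lemma~\ref{primepower}) applies.

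Next, the key observation is that $\xi_t$, defined in \eqref{xit} as $(h(t/N)-h(1/N))^{12N}$, lies in $F$ whenever $t$ is chosen so that $C_t$ fixes $F$; more precisely $\xi_t$ is built from $h(t/N)=f_{(N)}(C_t)$ and $h(1/N)=f_{(N)}(C_1)$, and by Proposition~\ref{transformation} the Galois action of $\sigma(C')$ on $h(t/N)-h(1/N)$ sends it to $f_{(N)}(C_tC')-f_{(N)}(C')$. Hence $\xi_t$ is fixed by $\mathrm{Cl}(K_{(N)}/F)$ precisely when $C_t\in\mathrm{Cl}(K_{(N)}/F)$ — and by Lemma~\ref{ringGalois} every $C_t$ with $\gcd(N,t)=1$ lies in $\mathrm{Cl}(K_{(N)}/H_N)\subseteq\mathrm{Cl}(K_{(N)}/F)$ when $\chi$ has been arranged trivial on $\mathrm{Cl}(K_{(N)}/H_N)$ — so $\xi_t\in F$. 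Then for a character $\chi$ of $\mathrm{Cl}(N)$ that is \emph{trivial} on $\mathrm{Cl}(K_{(N)}/F)$ the sum $S(\overline{\chi},\,\xi_t)=\sum_C\overline{\chi}(C)\ln|\xi_t^{\sigma(C)}|$ must vanish, because $\overline{\chi}$ factors through $\mathrm{Gal}(F/K)$ while $\ln|\xi_t^{\sigma(C)}|$ also depends only on $\sigma(C)|_F$, so the sum is a nonzero multiple of $\sum_{C\in\mathrm{Cl}(K_{(N)}/F)}\overline{\chi}(C)=0$.

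The contradiction is now immediate: Proposition~\ref{8972} (when $\gcd(72,N)\in\{1,8,9,72\}$) together with Proposition~\ref{otherthan8972} (when $\gcd(72,N)\in\{2,3,4,6,12,18,24,36\}$, using $N\neq2,3,4,6$ so that Lemma~\ref{tlemma} supplies a suitable $t$) produces, under the standing assumption that $F$ is properly contained in $K_{(N)}$, a character $\chi$ and an integer $t$ with $S(\overline{\chi},\,\xi_t)\neq0$ by (B3), while (B1) says $\chi$ is nontrivial on $\mathrm{Cl}(K_{(N)}/F)$. To exploit (B1) I would instead apply the vanishing argument of the previous paragraph not to $\chi$ directly but observe that $F$ properly inside $K_{(N)}$ forces $\xi_t\in F$ together with $S(\overline{\chi},\,\xi_t)\neq 0$ to be contradictory once one also knows $\chi$ restricted to $\mathrm{Cl}(N)/\mathrm{Cl}(K_{(N)}/F)$; the cleanest route is: since $\xi_t\in F$, the function $C\mapsto\ln|\xi_t^{\sigma(C)}|$ is $\mathrm{Cl}(K_{(N)}/F)$-invariant, so $S(\overline{\chi},\,\xi_t)$ is nonzero \emph{only if} $\overline{\chi}$ is trivial on $\mathrm{Cl}(K_{(N)}/F)$, contradicting (B1). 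Hence $F$ cannot be proper, i.e. $F=K_{(N)}$.

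The two cases $\gcd(72,N)\in\{1,8,9,72\}$ and $\gcd(72,N)\in\{2,3,4,6,12,18,24,36\}$ exhaust all $N$ since these are all possible values of $\gcd(72,N)$, and together with the reduction \eqref{reduce} handling the split-$2$ case, this covers every $N>1$ with $N\neq2,3,4,6$. The main obstacle, already absorbed into Propositions~\ref{8972} and \ref{otherthan8972}, is the construction of the character $\chi$: one needs $\chi$ simultaneously nontrivial on $\mathrm{Cl}(K_{(N)}/F)$ (to get the eventual contradiction), trivial on $\mathrm{Cl}(K_{(N)}/H_N)$ (so that all the $H_N$-level terms in Lemma~\ref{summation} collapse and $\chi(C_t)=1$), nontrivial on the specific subgroups $\mathrm{Cl}(K_{(N)}/K_{(N_\pm)})$ (so the Siegel-Ramachandra sums indexed by $B_\pm$ vanish), and with conductor divisible by every prime ideal over $(N)$ (so that Remark~\ref{Eulerfactor} guarantees $S(\overline{\chi})\neq0$); reconciling all four demands is exactly what the case analysis on $\gcd(72,N)$, Lemma~\ref{character}, Lemma~\ref{primepower}, and the explicit Table~\ref{table1} accomplish. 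Once such $\chi$ and $t$ are in hand, the final deduction is the short descent argument above, so I would present the theorem's proof as: reduce via \eqref{reduce}; invoke Proposition~\ref{8972} or Proposition~\ref{otherthan8972} to get $(\chi,t)$ with (B1)--(B3); note $\xi_t\in F$ from Lemma~\ref{ringGalois} and Proposition~\ref{transformation}; observe $S(\overline{\chi},\xi_t)=0$ by $\mathrm{Cl}(K_{(N)}/F)$-invariance and (B1), contradicting (B3); conclude $F=K_{(N)}$.
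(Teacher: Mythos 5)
Your overall skeleton is exactly the paper's: assume $F=K(h(1/N))$ is proper in $K_{(N)}$, invoke Propositions~\ref{8972} and \ref{otherthan8972} to get $(\chi,t)$ with (B1)--(B3), show $\xi_t\in F$, deduce $S(\overline{\chi},\xi_t)=0$ from the coset decomposition and (B1), contradict (B3), and dispose of the case $2\,\|\,N$ with $2$ split via (\ref{reduce}). However, your justification of the pivotal step $\xi_t\in F$ does not work. You argue that $\xi_t$ is fixed by $\mathrm{Cl}(K_{(N)}/F)$ because $C_t\in\mathrm{Cl}(K_{(N)}/H_N)\subseteq\mathrm{Cl}(K_{(N)}/F)$; but that inclusion is equivalent to $F\subseteq H_N$, which never holds here: by (\ref{CM}) we have $HF=K_{(N)}$, while $K_{(N)}\neq H_N$ since $\mathrm{Gal}(K_{(N)}/H_N)\simeq(\mathbb{Z}/N\mathbb{Z})^*/\{\pm1\}$ is nontrivial for $N\neq2,3,4,6$ (Lemma~\ref{ringGalois}). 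In fact $C_t$ cannot lie in $\mathrm{Cl}(K_{(N)}/F)$ at all: since $h(1/N)\in F$, that would give $h(t/N)=h(1/N)^{\sigma(C_t)}=h(1/N)$, impossible because $h$ is injective on $(\mathbb{C}/\mathcal{O}_K)/\{\pm1\}$ and $t\not\equiv\pm1\Mod{N}$; so your ``fixed precisely when $C_t\in\mathrm{Cl}(K_{(N)}/F)$'' criterion would even predict $\xi_t\notin F$. The correct (and shorter) argument, which is the paper's: $K_{(N)}/K$ is abelian, so the intermediate field $F$ is automatically Galois over $K$; hence $F$ contains the conjugate $h(1/N)^{\sigma(C_t)}$, which equals $h(t/N)$ by Proposition~\ref{transformation} and Lemma~\ref{tinvariant}, and therefore $\xi_t=(h(t/N)-h(1/N))^{12N}\in F$. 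No membership of $C_t$ in any subgroup is needed.

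A second, smaller slip: in your middle paragraph you claim $S(\overline{\chi},\xi_t)$ vanishes when $\chi$ is \emph{trivial} on $\mathrm{Cl}(K_{(N)}/F)$ ``because $\sum_{C\in\mathrm{Cl}(K_{(N)}/F)}\overline{\chi}(C)=0$''; that character sum vanishes precisely when $\chi$ is \emph{nontrivial} on the subgroup, so the statement is backwards as written. Your final paragraph does state the correct version --- since $C\mapsto\ln|\xi_t^{\sigma(C)}|$ is $\mathrm{Cl}(K_{(N)}/F)$-invariant once $\xi_t\in F$, the sum factors and $S(\overline{\chi},\xi_t)\neq0$ forces $\chi$ trivial on $\mathrm{Cl}(K_{(N)}/F)$, contradicting (B1) --- and this is exactly the computation in the paper. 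With the $\xi_t\in F$ step repaired via normality of $F/K$, the rest of your outline, including the reduction $K_{(N)}=K_{(N/2)}=K(h(2/N))$ when $2\,\|\,N$ and $2$ splits, coincides with the paper's proof of Theorem~\ref{main}.
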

\begin{proof}
First, consider the case where $2\nparallel N$ or $2$ does not split in $K$. 
Suppose on the contrary that $F=K\left(h(1/N)\right)$ is properly contained in $K_{(N)}$.
Then,
by Propositions \ref{8972} and \ref{otherthan8972},
there exist a character $\chi$ of $\mathrm{Cl}(N)$
and an integer $t$ such that
\begin{enumerate}
\item[(B1)] $\chi$ is nontrivial on $\mathrm{Cl}(K_{(N)}/F)$,
\item[(B2)] $\gcd(N,\,t)=1$ and $t\not\equiv\pm1\Mod{N}$,
\item[(B3)] $S(\overline{\chi},\,\xi_t)\neq0$.
\end{enumerate}
On the other hand, since $F$ is a Galois extension
of $K$, it contains the Galois conjugate $h(1/N)^{\sigma(C_t)}$
of $h(1/N)$.
We then see by Proposition \ref{transformation}
and Lemma \ref{tinvariant} that
\begin{equation*}
h(1/N)^{\sigma(C_t)}
=f_{\left[\begin{smallmatrix}0\\1/N\end{smallmatrix}\right]}(\tau_K)^{\sigma(C_t)}
=f_{(N)}(C_1)^{\sigma(C_t)}
=f_{(N)}(C_t)
=f_{\left[\begin{smallmatrix}0\\t/N\end{smallmatrix}\right]}(\tau_K)
=h(t/N).
\end{equation*}
Thus $F$ contains the element $\xi_t=\left(h(t/N)-h(1/N)\right)^{12N}$.
Now, we derive that
\begin{eqnarray*}
S(\overline{\chi},\,\xi_t)&=&
\sum_{C\in\mathrm{Cl}(N)}\overline{\chi}(C)\ln\left|\xi_t^{\sigma(C)}\right|\\
&=&\sum_{\substack{B\in\mathrm{Cl}(N)\\\Mod{
\mathrm{Cl}(K_{(N)}/F)}}}\sum_{A\in\mathrm{Cl}(K_{(N)}/F)}
\overline{\chi}(AB)\ln\left|\xi_t^{\sigma(AB)}\right|\\
&=&\sum_{\substack{B\in\mathrm{Cl}(N)\\\Mod{\mathrm{Cl}(K_{(N)}/F)}}}
\overline{\chi}(B)\ln\left|\xi_t^{\sigma(B)}\right|
\sum_{A\in\mathrm{Cl}(K_{(N)}/F)}\overline{\chi}(A)
\quad\textrm{because $\xi_t\in F$}\\
&=&0\quad\textrm{by (B1)},
\end{eqnarray*}
which contradicts (B3).
Hence, we have $K_{(N)}=K\left(h(1/N)\right)$ as desired.
\par
Second, consider the case where
$2\,\|\,N$ and $2$ splits in $K$. Then we have
\begin{eqnarray*}
K_{(N)}&=&K_{(N/2)}\quad\textrm{as mentioned in (\ref{reduce})}\\
&=&K(h(2/N))\quad\textrm{by the first case of the theorem}.
\end{eqnarray*}
This completes the proof. 
\end{proof}

\bibliographystyle{amsplain}

\address{% the first author
Department of Mathematical Sciences \\
KAIST \\
Daejeon 34141\\
Republic of Korea} {jkkoo@math.kaist.ac.kr}
\address{% the corresponding author
Department of Mathematics\\
Hankuk University of Foreign Studies\\
Yongin-si, Gyeonggi-do 17035\\
Republic of Korea} {dhshin@hufs.ac.kr}
\address{
Department of Mathematical Sciences \\
KAIST \\
Daejeon 34141\\
Republic of Korea} {math\_dsyoon@kaist.ac.kr}

\end{document}